\newcommand{\ol}{\overline}
\DeclareMathOperator{\Ker}{Ker}
\DeclareMathOperator{\Ran}{Ran}\DeclareMathOperator*{\res}{res}
 \DeclareMathOperator*{\im}{Im}
 \DeclareMathOperator*{\rank}{rank}
 \DeclareMathOperator*{\dist}{dist}
\newcommand{\Z}{\mathbb Z}
\newcommand{\C}{\mathbb C}
\newcommand{\E}{{\it{\EuScript E}}}
\newcommand{\Pm}{{\it{\EuScript P}}}
\newcommand{\kratno}{\lower.5ex\hbox{$\,\vdots\,$}}
\renewcommand{\leq}{\leqslant}
\renewcommand{\geq}{\geqslant}
\renewcommand{\dots}{\ldots}
\renewcommand{\theequation}{\arabic{section}.\arabic{equation}}
\theoremstyle{plain}
\newtheorem{thm}{Theorem}[section]
\newtheorem{lem}[thm]{Lemma}
\newtheorem{prop}[thm]{Proposition}
\theoremstyle{remark}
\newtheorem{rem}[thm]{Remark}
\theoremstyle{definition}
\newtheorem{dfn}[thm]{Definition}
\newcommand{\qForm}[8]{\begin{pmatrix} #1 & #2 \end{pmatrix}\!\begin{pmatrix}
#3 & #4\\ #5 & #6 \end{pmatrix}\!
\begin{pmatrix} #7\vphantom{#3 #4} \\ #8\vphantom{#5 #6} \end{pmatrix} }
\newcommand{\Fm}{F_-}
\newcommand{\Fp}{F_+}
\newcommand{\Fpm}{F_\pm}
\newcommand{\Bcondm}{\Gamma_-}
\newcommand{\Bcondp}{\Gamma_+}
\newcommand{\Bcondmd}{\Gamma_-^\perp}
\newcommand{\Bcondpd}{\Gamma_+^\perp}
\newcommand{\wt}{\widetilde}
\newcommand{\Tp}{T_+}
\newcommand{\Tm}{T_-}
\newcommand{\Em}{E_-} \newcommand{\PEm}{P_-}
\newcommand{\Ep}{E_+} \newcommand{\PEp}{P_+}
 \newcommand{\PEpm}{P_\pm}
\begin{document}

\title[Vector-valued Sturm-Liouville problem. I. Uniqueness theorem]{Inverse vector-valued Sturm-Liouville problem.\\ I. Uniqueness theorem}

\author[Dmitry Chelkak]{Dmitry Chelkak$^\mathrm{a,c}$}
\thanks{}
\email{}

\author[Sergey Matveenko]{Sergey Matveenko$^\mathrm{b,c}$}

\thanks{\textsc{${}^\mathrm{A}$ St.Petersburg Department of Steklov Mathematical Institute (PDMI RAS).
Fontanka~27, 191023 St.Petersburg, Russia.}}

\thanks{\textsc{${}^\mathrm{B}$ Saint-Petersburg State University of Aerospace Instrumentation.
Bol'shaya Morskaya st.~67, 190000, St. Petersburg,  Russia.}}

\thanks{\textsc{${}^\mathrm{C}$ Chebyshev Laboratory, Department of Mathematics and
Mechanics, Saint-Petersburg State University. 14th Line, 29b, 199178 Saint-Petersburg,
Russia.}}

\thanks{{\it E-mail addresses:} \texttt{dchelkak@pdmi.ras.ru}, \texttt{matveis239@gmail.com}}

\begin{abstract}
This paper starts a series devoted to the vector-valued Sturm-Liouville problem
$-\psi''+V(x)\psi=\lambda\psi$, $\psi\in L^2([0,1];\C^N)$, with separated boundary conditions.
The overall goal of the series is to give a complete characterization of classes of spectral data corresponding to potentials $V=V^*\in L^p([0,1];\C^{N\times N})$ for a fixed $1\le p <+\infty$ and separated boundary conditions having the most general form. In the first paper we briefly describe our
approach to this inverse problem and prove some preliminary results including the relevant uniqueness theorem.
\end{abstract}

\maketitle

\section{Introduction}

\subsection{Introduction} Inverse spectral problems for scalar Sturm-Liouville operators were actively investigated after the seminal paper \cite{Borg}. In this paper Borg showed that a scalar potential $v(x)$ is uniquely
determined by the two spectra of Sturm-Liouville operators $-\psi''+v(x)\psi=\lambda\psi$, $x\in[0,1]$, with the same boundary
conditions at~$1$ and different boundary conditions at~$0$. Later on Marchenko \cite{Mar50} proved that the so-called spectral function (or, equivalently, the Weyl-Titchmarsh function $m(\lambda)$, $\lambda\in\C$, which is analytic outside of the spectrum and has simple poles at eigenvalues) determines the potential uniquely. Shortly afterwards Gelfand and Levitan \cite{GelLev} suggested an algorithm for the reconstruction of $v(x)$ starting with spectral data, and another approach to this inverse problem was developed by Krein \cite{Kr1,Kr4}. After several decades of intensive research this activity culminated when characterization theorems, i.e., complete descriptions of spectral data that correspond to potentials from a given ``nice'' class, appeared. More information about the inverse spectral theory for scalar Sturm-Liouville operators can be found in monographs \cite{M86, Lev84, PT, FYu01}, survey \cite{Ges07}, and references cited therein.

By contrast, similar characterization problems for vector-valued Sturm-Liouville operators were not explored as much (though vector-valued Shr\"odinger operators on the half-line were discussed in one of the first monographs on the subject \cite{AM63}), an interest in the corresponding detailed inverse theory has quickened only recently \cite{Mal05,Yu06a,Yu06b,CKIsoSets,CK,MT,Bo11,Bo12}, see also miscellaneous results in \cite{ChSh97, JL98a, JL98b, Car99, CGHL00, Sh01, Car02}.

In our paper we consider the inverse spectral problem for self-adjoint operators acting in $L^2([0,1];\C^N)$ which are defined by the differential expression
\begin{equation}\label{SLPi}
L\psi=-\psi''+V(x)\psi,
\end{equation}
and \emph{separated boundary conditions of general type}, where $V=V^*\in L^p([0,1];\C^{N\times N})$, $1\leq p<+\infty$, is an $N\times N$ matrix-valued potential. It is worth noting that in the existing literature only two special cases of boundary conditions are usually dealt with: either Dirichlet $\psi(0)=\psi(1)=0$ or Neumann-type (also known as Robin or third-type) $\psi'(0)-a\psi(0)=\psi'(1)+b\psi(1)=0$ ones. In some sense, these two cases resemble the scalar situation (e.g., the leading term in the asymptotics of eigenvalues is $\pi^2n^2$), while for the vector-valued problem there is an additional geometry of the space $\C^N$ that can come into play. Namely, if we consider most general separated boundary conditions
\begin{equation}
\label{BoundConditionIntro}
\begin{aligned}
\Bcondm\psi=T_-^\perp(\psi'(0)-a \psi(0))-T_-\psi(0)=0,\\
\Bcondp\psi=T_+^\perp(\psi'(1)+b\psi(1))-T_+\psi(1)=0,
\end{aligned}
\end{equation}
where $T_\pm$ and $T_\pm^\perp=I-T_\pm$ are orthogonal projectors acting in~$\C^N$, and
\[
a=T_-^\perp a T_-^\perp \colon H_-^\perp\to H_-^\perp,\quad b=T_+^\perp bT_+^\perp\colon H_+^\perp\to
H_+^\perp
\]
are self-adjoint  linear operators acting in $H_\pm^\perp=\Ran T_\pm^\perp=\Ker T_\pm\subset\C^N$, respectively, then even the leading terms in the asymptotics of eigenvalues depend on the mutual location of $H_-$ and $H_+$ in $\C^N$. Thus, our research is devoted to the inverse spectral problem for operators (\ref{SLPi})--(\ref{BoundConditionIntro}), especially to the case when all projectors $T_\pm$, $T_\pm^\perp$ are nontrivial and no special orthogonality relations hold.

The motivation for this work is two-fold. The first is to give a consistent solution to the spectral characterization problem for operators (\ref{SLPi})--(\ref{BoundConditionIntro}), the case which, to the best of our knowledge, has not been treated in the literature in full generality. The second is to demonstrate a new approach to characterization results in inverse spectral theory for 1D differential operators. A scheme of this kind was firstly introduced in \cite{PT} and further developed in \cite{CKK,Che09}. Finally, to prove a characterization theorem, we do not need neither (a) any explicit reconstruction procedure applied to given ``generic'' spectral data, thus we avoid a careful tracing of fine spectral data asymptotics along a reconstruction algorithm, nor (b) any differentiability-type arguments for the mapping
\[
\Lambda:\{\text{operators}\}\mapsto\{\text{spectral~data}\}.
\]
Note that, even if the mapping $\Lambda$ is actually smooth, it may be hard to compute and to analyze its gradient, especially if some regularization of spectral data is plugged in. Of course, both~(a) and~(b) are of independent interest. Moreover, in our approach we directly use a sort of~(a)
for ``nice perturbations'' of spectral data, so, by no means, a (formal) reconstruction algorithm plays a very important role in any inverse problem.
Nevertheless, we believe that there are many situations in which a robust proof of characterization results, not going into technicalities of a particular
 reconstruction procedure (convergence issues, smoothness of intermediary objects etc.), is appreciated.



\subsection{Inverse problem for vector-valued Sturm-Liouville operators. Overall strategy.}

In this Section we briefly describe our approach to the inverse spectral problem for differential operators (\ref{SLPi})--(\ref{BoundConditionIntro}), especially to characterization results for properly chosen spectral data of these operators. As the spectrum itself doesn't determine $V(x)$ uniquely besides specific setups similar to the classical Ambarzumayn theorem \cite{Amb}, one should supplement eigenvalues with some additional parameters. For scalar operators, there are two standard choices of those: normalizing constants $\alpha_n(v)$ originally introduced by Marchenko \cite{Mar50,M86}, and norming constants $\nu_n(v)$ used by Trubowitz and co-authors \cite{IT,IMT,DT,PT}. It is worth noting that, once the inverse problem is solved for one of the choices of additional parameters, this solution can be easily translated to the other choice (e.g., see~\cite[Sect.~3]{Che09}). For the scalar Sturm-Liouville problem on $[0,1]$ norming constants $\nu_n$ are slightly more useful as the condition $\nu_n(v)\equiv 0$ exactly characterizes symmetric potentials $v(x)\equiv v(1\!-\!x)$. Nevertheless, it is not clear what is a correct analogue of $\nu_n$'s for vector-valued potentials, thus, following \cite{Yu06a, Yu06b, CKIsoSets, CK, MT, Bo11, Bo12} and others, in our work the Weil-Titchmarsh function residues are declared to be the supplementary spectral data, see Definition~\ref{SpectralData} below.

In our approach a solution to the characterization problem for the mapping $\Lambda$ (in other words, a complete characterization of classes of spectral data corresponding to fixed classes of potentials and boundary conditions) consists of four disjoint parts:

\smallskip

\emph{(i) The mapping $\Lambda$ is one-to-one.} It is convenient to fix the projectors $T_\pm$ in (\ref{BoundConditionIntro}) but to include operators $a,b$ involved in these boundary conditions into the set of unknown parameters together with a potential $V$. Thus, in this part we claim that spectral data of operator $L$ determines $(V;a,b)$ uniquely. It is worth noting that one can reconstruct $T_\pm$ from rough asymptotics of the Weil-Titchmarsh function residues.

\smallskip

\emph{(ii) Algebraic independence and local changes of spectral data.} In this part we check that the spectral data are ``locally free parameters'', i.e., each particular one of them can be changed in an (almost) arbitrary way: the only one restriction is some ``algebraic nondegeneracy condition'', see~(\ref{NondegeneracyCond}). While in the scalar case those local changes are given by shifts of a single eigenvalue and changes of a particular Weyl-Titchmarsh function residue, the vector-valued problem admits richer behavior: eigenvalues having intermediate multiplicities can merge or split.

\smallskip

\emph{(iii) Sharp asymptotics of spectral data.} There is a principal difference between spectral asymptotics for the scalar problem and the vector-valued one. In particular, one needs to use some regularization of spectral data if no particular assumptions ensuring the asymptotic simplicity of eigenvalues are imposed, see further discussion below. Moreover, for boundary conditions (\ref{BoundConditionIntro}) rather involved asymptotics arise even in leading terms as geometry of mutual location of $H_-$ and $H_+$ comes into play.

\smallskip

\emph{(iv) The mapping $\Lambda$ is onto.} In this part we show that all data satisfying
algebraic nondegeneracy condition (\ref{NondegeneracyCond}) and asymptotics (iii) are spectral data of some potential from a given class.
Following \cite{PT,CKK,Che09} we prove that $\Lambda$ is onto in two steps. The first is a ``local surjection'' statement: some neighborhood of the unperturbed spectral data is contained in the image of $\Lambda$, which follows from (iii) and an abstract fixed point theorem. The second is an iteration of the procedure given in (ii) applied to a finite number of first eigenvalues, see further details below. 

\smallskip

For the sake of convenience, we present our results divided into four parts according to the list given above.

\smallskip

\noindent {\bf (i) Uniqueness Theorem.} In other words, we prove that the mapping $\Lambda$ is \mbox{$1$-to-$1$}.
This is the simplest and rather well investigated part, cf. \cite{Mal05, Yu06a, CKIsoSets}. In our
paper we adopt the method from \cite{PT} and prove that for an arbitrary pair of projectors $T_\pm$
spectral data of the Sturm-Liouville operator (\ref{SLPi})--(\ref{BoundConditionIntro}) determine $(V;a,b)$ uniquely.
Dealing with boundary conditions~(\ref{BoundConditionIntro}) we face with asymptotics of fundamental solutions that differ from the scalar case even in leading terms.
Roughly speaking, we are forced to work with an arbitrary mixture of Dirichlet, Neumann and general ``twisted'' (e.g., $\rank T_-=\rank T_+ =\rank T_-T_+=1$ for $N=2$) boundary conditions simultaneously, so an appropriate notation should be developed. In a sense, along with the uniqueness theorem proof given in this paper we set up a framework for the further sharp asymptotic analysis of spectral data given in \cite{ChMat-III}.

\smallskip

\noindent {\bf (ii) Local changes of spectral data, \cite{ChMat-II}}.
Recall that we choose the sequence of residues of the corresponding Weyl-Titchmarsh function to be the supplementary spectral data of the operator $L$. In \cite{ChMat-II} we show that these residues are {``locally free parameters''}, i.e., we can modify each one of them (and, further, any finite number) in an almost arbitrary way keeping $V(x)$ in a fixed class we are dealing with. The only one restriction for those local modifications is the algebraic nondegeneracy condition~(\ref{NondegeneracyCond}) given below. An algebraic restriction of this sort for spectral data of vector-valued Sturm-Liouville operators with Dirichlet boundary conditions appeared in \cite{CKIsoSets}, and (\ref{NondegeneracyCond}) provides a suitable version of that for general boundary conditions. It should be pointed out that two special cases of such local changes of spectral data are merging and splitting up of eigenvalues. This is a particular feature of the vector-valued problem: if $N=1$, all eigenvalues must be simple.

In the scalar setup explicit transforms of $(V;a,b)$ corresponding to the change of a single Weyl-Titchmarsh function residue or to the shift of a single eigenvalue can be constructed via the well known Darboux transform technique, see~\cite{IT,PT}. A similar technique was used in \cite{CKIsoSets} to describe the isospectral set for the vector-valued problem with Dirichlet boundary conditions. Unfortunately, it didn't allow to shift eigenvalues or to change their
 multiplicities which caused some technical problems in the characterization theorem proof given in \cite{CK} and prevented an
 immediate generalization of this characterization result to the case of an arbitrary mean potential.

 In \cite{ChMat-II} we use a more powerful tool (namely, the remarkable reconstruction procedure based on the so-called method of spectral mappings) developed by Yurko \cite{FYu01, Yu06a, Yu06b} and Bondarenko \cite{Bo11, Bo12}. Note that only the algebraic nature of this reconstruction
 algorithm matters and no hard analysis of spectral asymptotics is involved when one modifies a \emph{single} Weyl-Titchmarsh function residue.

\smallskip

\noindent {\bf (iii) Sharp asymptotics of spectral data, \cite{ChMat-III}}. It should be said that spectral data asymptotics of vector-valued Sturm-Liouville operators with summable potentials do \emph{not} mimic those for scalar operators. Even in the simplest case of Dirichlet boundary conditions asymptotics of Weyl-Titchmarsh function residues at series of eigenvalues $\pi^2n^2\!+\!v_j^0\!+\!o(1)$ corresponding to different eigenvalues $v_j^0$ of the mean potential $\int_0^1V(t)dt$ are not independent, see \cite[Eq.~(1.6)]{CK}. Namely, there are \emph{two} conditions that complement each other: rough asymptotics hold true for particular series corresponding to each of $v_j^0$ while some ``regularization'' (summation around~$\pi^2n^2$) should be used to specify next order terms. To simplify the analysis of such a dependence, in \cite{CK} it was assumed that all $v_j^0$ are pairwise distinct. Under this assumption \cite[Theorem~1.1]{CK} provides a complete description of the asymptotic structure of spectral data for $L^2$~potentials and Dirichlet boundary conditions.

In the subsequent paper~\cite{MT} Mikityuk and Trush gave the complete characterization of spectral data corresponding to matrix-valued distributional potentials in the Sobolev space~$W_2^{-1}$ using the Krein accelerant method. Note that the involved asymptotic structure of spectral data described above disappears for distributional potentials as there are no particular series of eigenvalues anymore and the summation around $\pi^2n^2$ is plugged in spectral asymptotics from the very beginning.

The next important contribution to the inverse spectral theory of vector-valued Sturm-Liouville operators was made by Bondarenko who applied a version of the method of spectral mappings to vector-valued operators with square summable potentials. In particular, \cite{Bo11,Bo12} contain the explicit reconstruction algorithms for Neumann-type and Dirichlet boundary conditions, respectively, without any restrictions on the mean potential. Unfortunately, there is a crucial inaccuracy in the asymptotics of spectral data given in these papers that is potentially dangerous for important estimates used along the analysis of the reconstruction procedure. Namely, \cite[Lemma~3,~Eq.~(4)]{Bo12} essentially claims that the behavior of single Weyl-Titchmarsh function residues mimic the scalar case which directly \emph{contradicts} to \cite[Theorem~1.1]{CK} if all $v_j^0$ are pairwise distinct: the correct necessary and sufficient asymptotics are \emph{weaker}. A similar mistake in the asymptotic structure of spectral data is contained in \cite{Bo11}: the last identities in the proof of \cite[Lemma~2]{Bo11} do \emph{not} imply the lemma statement.

Thus, even for Dirichlet boundary conditions some nontrivial analysis is needed to get the sharp asymptotic structure of spectral data. For general boundary conditions~(\ref{BoundConditionIntro}) such asymptotics are even more tricky to handle: now the series of eigenvalues are firstly grouped according to different $O(n)$ terms coming from the geometric interplay between projectors $T_-$ and $T_+$ (see Lemma~\ref{LemmaDecomposition} and Proposition~\ref{PropositionRoughAsymp} below), and only inside of these groups there is an influence of the mean potential which splits each group into particular series of eigenvalues of the form $\pi^2(n\!\pm\!\gamma_j)^2+\mathrm{const}+o(1)$. Moreover, as we do not impose any assumptions on the mean potential, there could be asymptotically merging series of eigenvalues and some further regularization is needed for such ``asymptotically multiple'' residues of the Weyl-Titchmarsh function. According to the overall strategy described above we postpone the careful analysis of spectral data asymptotics for operators (\ref{SLPi})--(\ref{BoundConditionIntro}) until the forthcoming paper~\cite{ChMat-III}.

\smallskip

\noindent {\bf (iv) Characterization theorem, \cite{ChMat-IV}}. This is the culminating part of our project. Here we show that sharp asymptotics obtained in (iii) together with the algebraic condition (\ref{NondegeneracyCond}) provide the complete characterization of spectral data corresponding to a fixed class of potentials, say, $V=V^*\in L^p([0,1];\C^{N\times N})$, $1\leq p<+\infty$. In general, there are two different strategies to prove results of this sort. The first one is to track some explicit reconstruction procedure carefully in order to show that, being started with data that satisfy necessary asymptotic and algebraic assumptions, it ends up with a potential from the class we are working with. Unfortunately, this way often leads to technical difficulties, especially if spectral asymptotics are complicated, e.g., to the best of our knowledge, no solution of this kind is known for the perturbed harmonic oscillator, cf.~\cite{CKK}. For the vector-valued Sturm-Liouville problem on a finite interval with $L^2$ potentials and Neumann-type or Dirichlet boundary conditions we believe that one can eventually save the reconstruction algorithm given in \cite{Bo11, Bo12} starting with correct (strictly weaker) assumptions on spectral data, though we cannot trace all technical details. Nevertheless, as even more involved technicalities appear for general boundary conditions (\ref{BoundConditionIntro}) and $L^p$ potentials, in our work we use the other approach which is briefly described below.

The second strategy to prove that the mapping $\Lambda$ is a bijection goes back to the work of Trubowitz and his co-authors, see \cite{PT}. As this mapping is already known from (i) to be $1$-to-$1$, it is sufficient to prove that $\Lambda$ is onto which is done in two steps independent of each other. Firstly, we show that all data satisfying (\ref{NondegeneracyCond}) and asymptotics obtained in (iii) which are \emph{close enough} to the spectral data of some reference potential (say, a~constant) can be obtained as spectral data of some potential from the class we are dealing with. In particular, this ``local surjection''  guarantees that for \emph{any} given data satisfying (\ref{NondegeneracyCond}) and (iii) there exists a proper potential whose spectral data coincide with the given ones for all sufficiently large energies. After that, the second step is purely algebraic: basing on the explicit step-by-step modification procedure (ii) we change a finite number of first Weyl-Titchmarsh function residues to be the given ones.

There are several ways to prove the ``local surjection'' statement. In \cite{PT}, the inverse function theorem was used for this purpose. Unfortunately, to apply this theorem one needs to prove in advance that the mapping $\Lambda$ is smooth (continuously differentiable in the Fr\'echet sense) everywhere in a vicinity of the reference potential which seems to be a quite hard technical problem in our case. Some way to overcome such technicalities was suggested in \cite{Che09}: namely, using the scalar Strum-Liouville problem as a model example, it was shown that one can derive the ``local surjection'' from the abstract Leray-Shauder-Tichonoff fixed point theorem and the differentiability of $\Lambda$ at a single point (reference potential).

In our forthcoming paper~\cite{ChMat-IV} we push this scheme even further and use only the fact that near a constant potential the mapping $\Lambda$  is close enough to some linear isomorphism. In particular, required estimates are not so sharp to establish the differentiability of $\Lambda$
at any point. Thus, instead of ``analytic'' properties of this mapping,
our approach is based on ``geometric'' analysis
in the domain and range spaces of~$\Lambda$. Though the present research is focused on the potential classes $L^p$,
we believe that our methods are universal and can be applied without major changes to other classes,
e.g., to Sobolev spaces $W_p^n$ (in this case a deeper hierarchical structure of eigenvalues series shows up and more involved analysis of spectral asymptotics should be done).

\smallskip

\noindent {\bf Organization of the paper.} The rest of this paper is devoted to the rough asymptotic analysis of eigenvalues of operators (\ref{SLPi})--(\ref{BoundConditionIntro}), especially their dependence on the mutual geometry of projectors $T_\pm$, and to the proof of the uniqueness theorem for these operators. We introduce the spectral data (residues of the proper Weyl-Titchmarsh function) and formulate our main results in Sect.~\ref{sec:mainresults}. Asymptotics of fundamental solutions, their Wronskian and its inverse are computed in Sect.~\ref{sec:W-1asymp}. The uniqueness theorem is proved in Sect.~\ref{sec:UniquenessPrf}, and Appendix contains several basic facts about matrix-valued Weyl-Titchmarsh functions corresponding to vector-valued Sturm-Liouville operators.

\smallskip

\noindent {\bf Acknowledgements.} This research was supported by the Chebyshev Laboratory at Saint Petersburg State University under the Russian Federation Government grant 11.G34.31.0026 and JSC~``Gazprom Neft''. The first author was partly supported by the RFBR grant 11-01-00584-a.

\section{Main results} \label{sec:mainresults}
\setcounter{equation}{0}
First of all, let us introduce some notation which is motivated by the theory of boundary triplets and proper extensions of closed symmetric operators in Hilbert spaces.
For orthogonal projectors $T_\pm$, $T_\pm^\perp=I-T_\pm$ and self-adjoint operators $a=T_-^\perp a T_-^\perp$, $b=T_+^\perp b T_+^\perp$ involved in the boundary conditions (\ref{BoundConditionIntro}) we set
\begin{equation} \label{Bcond}
\begin{aligned}
\Bcondm\psi&=T_-^\perp(\psi'(0)-a \psi(0))-T_-\psi(0),\quad&
\Bcondmd\psi&=T_-^\perp\psi(0)+T_-\psi'(0),\\
\Bcondp\psi&=T_+^\perp(\psi'(1)+b\psi(1))-T_+\psi(1),\quad&
\Bcondpd\psi&=T_+^\perp\psi(1)+T_+\psi'(1),&
\end{aligned}
\end{equation}
where a function $\psi=\psi(x)$, $x\in[0,1]$, can be either vector-valued or matrix-valued. In our project we consider the inverse spectral problem for  vector-valued Sturm-Liouville operators defined by the differential expression
\begin{equation}\label{SLP}
L\psi=-\psi''+V(x)\psi
\end{equation}
and boundary conditions
\begin{equation} \label{BoundConditions}
\Bcondm\psi=\Bcondp\psi=0,
\end{equation}
where $V=V^*\in L^1([0,1];\C^{N\times N})$ is a $N\times N$ matrix-valued potential. Due to the KLMN theorem (e.g., see ~\cite[\S{}X.2]{RS2}), one can define $L$ (via the corresponding quadratic form) as a self-adjoint operator acting in $L^2([0,1];\C^N)$.

It is well known that the spectrum of $L$ is purely discrete. Let
\[
\lambda_0<\lambda_1<\ldots<\lambda_\alpha<\ldots
\]
be the eigenvalues of $L$ (counted \emph{without} multiplicity) and $k_\alpha\in [1,N]$ denote the multiplicity of $\lambda_\alpha$. For a given potential $V$ and operators $a,b$ involved in the boundary conditions (\ref{BoundConditions}), let $\Fpm(x,\lambda)$ denote two $N\times N$ matrix-valued solutions of the differential equation
\begin{equation} \label{DiffEquation}
-\Psi''(x,\lambda)+V(x)\Psi(x,\lambda)=\lambda\Psi(x,\lambda)
\end{equation} such that
\begin{equation}\label{FpmBoundCond}
\begin{cases} \Bcondm\Fm=0,\\ \Bcondmd\Fm=I, \end{cases}\quad
\begin{cases} \Bcondp\Fp=0,\\ \Bcondpd\Fp=I. \end{cases}
\end{equation}
In other words,
$\Fm(0)=T_-^\perp,$ $\Fm'(0)=T_-+aT_-^\perp$ while $\Fp(1)=T_+^\perp,$ $\Fp'(1)=T_+-bT_+^\perp$.
One can easily check that $\Fpm(x,\lambda)$ are entire functions of $\lambda$ for each $x\in [0,1]$.


\begin{dfn}[\bf Spectral data]
 \label{SpectralData} For all $(V;a,b)$ and $\alpha\geq0$, let
 \[
\Pm_\alpha\colon\C^N\to\E_\alpha=\Ker (\Bcondp\Fm)(\lambda_\alpha)
 \]
denote the orthogonal projectors onto the subspaces $\E_\alpha$, and let the positive self-adjoint operators
$g_\alpha\colon\E_\alpha\to\E_\alpha$ be defined as
\begin{equation} \label{galpha}
\textstyle        g_\alpha = \left. G_\alpha\right|_{\E_\alpha}, \quad \text{where }\: G_\alpha=\Pm_\alpha  S_\alpha \Pm_\alpha\quad\text{and}\quad S_\alpha=\int\limits_0^1[\Fm(x,\lambda_\alpha)]^*\Fm(x,\lambda_\alpha) dx.
\end{equation}
We define the \emph{spectral data of $L$} to be the set of all triplets $\{(\lambda_\alpha,\,\Pm_\alpha,\,g_\alpha)\}_{\alpha\geq0}$.
\end{dfn}

\begin{rem} \label{ConnectionRanksMultiplicities}
Any vector-valued solution $\psi(x,\lambda)$ of the differential equation (\ref{DiffEquation}) satisfying the initial condition $\Bcondm \psi =0$ has the form $\Fm(x,\lambda)h$ for some $h\in\C^N$. Hence, all unnormalized eigenfunctions $\psi_\alpha(x)$ corresponding to the eigenvalue $\lambda_\alpha$ have the form
$\psi_\alpha=\Psi_\alpha h$, $\Psi_\alpha(x)=\Fm(x,\lambda_\alpha)\Pm_\alpha$, for some $h\in\E_\alpha$, and vice versa. In particular, it appears that
\[
\rank\Pm_\alpha=\dim \E_\alpha=k_\alpha.
\]
Moreover, the following identity is fulfilled:
\[
\textstyle \langle g_\alpha
h,h\rangle=\int\limits_0^1|\Psi_\alpha(x)h|^2dx, \quad h\in\E_\alpha,
\]
where $\langle u,v\rangle=u^*v$ is the scalar product of two vectors in $\C^N$ and $|u|^2=\langle u,u \rangle$.
\end{rem}

As usual (cf.~\cite{CKIsoSets}) the spectral data $\{(\lambda_\alpha,\,\Pm_\alpha,\,g_\alpha)\}_{\alpha\ge 0}$ can be thought about as a natural parameterization for residues of the \emph{matrix-valued Weyl-Titchmarsh function} corresponding to the differential operator $L$
\begin{equation}
\label{MFunctDef}
m(\lambda)=-(\Bcondmd\Fp)(\lambda){[(\Bcondm\Fp)(\lambda)]}^{-1}.
\end{equation}

\begin{prop} \label{resMfunction} For all $V=V^*\in L^1([0,1];\C^{N\times N})$ the Weyl-Titchmarsh function (\ref{MFunctDef}) is analytic in the complex plane $\C$ except simple poles at eigenvalues $\lambda_\alpha$, $\alpha\geq 0$, of the operator $L$. Moreover, $m(\ol{\lambda})=[m(\lambda)]^*$ for all $\lambda\in\C$, and
\[
\res\limits_{\lambda=\lambda_\alpha }m(\lambda)=-\Pm_\alpha g_\alpha^{-1}\Pm_\alpha\quad\text{for~all}~~\alpha\geq 0.
\]
\end{prop}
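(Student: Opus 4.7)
The overall plan is to use the Lagrange (Green's) identity for matrix-valued solutions of the Schr\"odinger equation to establish all three claims in a unified way. Write $A(\lambda):=(\Bcondm F_+)(\lambda)$ and $B(\lambda):=(\Bcondmd F_+)(\lambda)$; these are entire $N\!\times\!N$ matrix-valued functions, so $m=-BA^{-1}$ is meromorphic on $\C$ with singularities only where $A$ fails to be invertible. A vector $h$ lies in $\Ker A(\mu)$ precisely when $F_+(\cdot,\mu)h$ satisfies both boundary conditions --- the condition $\Bcondp\cdot=0$ holds by construction of $F_+$ --- i.e.\ precisely when $F_+(\cdot,\mu)h$ is a nonzero eigenfunction of $L$ with eigenvalue $\mu$. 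This confines the poles of $m$ to $\{\lambda_\alpha\}_{\alpha\ge 0}$.

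For the symmetry $m(\bar\lambda)=m(\lambda)^*$ I would introduce $\psi_+(x,\lambda):=F_+(x,\lambda)A(\lambda)^{-1}$, which by construction satisfies $\Bcondm\psi_+=I$, $\Bcondp\psi_+=0$ and $\Bcondmd\psi_+=-m(\lambda)$. Applying the Lagrange identity to $\psi_+(\cdot,\lambda)$ and $\psi_+(\cdot,\bar\mu)$, the boundary contribution at $x=1$ vanishes (it uses only $\Bcondp\psi_+=0$ together with $b=b^*$), while at $x=0$ a short computation --- reading off $\psi_+(0)$ and $\psi_+'(0)$ via the normalizations $\Bcondm\psi_+=I$, $\Bcondmd\psi_+=-m(\lambda)$ --- produces
\[
(\mu-\lambda)\int_0^1\psi_+(x,\bar\mu)^*\psi_+(x,\lambda)\,dx=m(\bar\mu)^*-m(\lambda).
\]
Setting $\mu=\lambda$ gives $m(\bar\lambda)^*=m(\lambda)$.

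The residue computation is the main analytic content. Fix $h\in\Ker A(\lambda_\alpha)$ and set $\psi:=F_+(\cdot,\lambda_\alpha)h$, $h':=B(\lambda_\alpha)h$. By Remark~\ref{ConnectionRanksMultiplicities} there is a unique $h''\in\E_\alpha$ with $\psi=F_-(\cdot,\lambda_\alpha)h''$, and matching boundary values at $x=0$ identifies $h''=B(\lambda_\alpha)h=h'$; consequently the map $h\mapsto B(\lambda_\alpha)h$ is a bijection $\Ker A(\lambda_\alpha)\to\E_\alpha$. A parallel Lagrange-identity computation with $F_-(\cdot,\lambda_\alpha)$ and $F_+(\cdot,\bar\mu)$ also yields $\E_\alpha=\Ker A(\lambda_\alpha)^*$. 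Differentiating $(L-\lambda)F_+=0$ in $\lambda$ at $\lambda_\alpha$ gives $(L-\lambda_\alpha)\dot F_+(\cdot,\lambda_\alpha)=F_+(\cdot,\lambda_\alpha)$, so integrating $\psi^*\psi$ by parts on $[0,1]$ and using $\dot F_+(1,\lambda)=\dot F_+'(1,\lambda)=0$ together with the elementary identities $T_-^\perp a=a$, $aT_-=0$ (which kill the $B'$-contributions at $x=0$) produces
\[
\int_0^1|\psi|^2\,dx={h'}^{*}A'(\lambda_\alpha)h.
\]
Combining this with $\int_0^1|\psi|^2\,dx=\langle g_\alpha h',h'\rangle$ (Remark~\ref{ConnectionRanksMultiplicities}) promotes the previous formula to the operator equality
\[
\Pm_\alpha A'(\lambda_\alpha)\big|_{\Ker A(\lambda_\alpha)}=g_\alpha\cdot B(\lambda_\alpha)\big|_{\Ker A(\lambda_\alpha)}
\]
between isomorphisms $\Ker A(\lambda_\alpha)\to\E_\alpha$; in particular $A(\lambda)$ has a simple zero and $m(\lambda)$ a simple pole at $\lambda_\alpha$.

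To conclude, a Schur-complement inversion relative to the orthogonal decompositions $\C^N=\Ker A(\lambda_\alpha)\oplus(\Ker A(\lambda_\alpha))^\perp$ in the domain and $\C^N=\E_\alpha\oplus\E_\alpha^\perp$ in the codomain gives
\[
\res_{\lambda=\lambda_\alpha}A(\lambda)^{-1}=\big[\Pm_\alpha A'(\lambda_\alpha)\big|_{\Ker A(\lambda_\alpha)}\big]^{-1}\Pm_\alpha,
\]
viewed as a $\C^N\to\C^N$ operator with range in $\Ker A(\lambda_\alpha)$; substituting the operator equality above turns this into $\big[B(\lambda_\alpha)|_{\Ker A(\lambda_\alpha)}\big]^{-1}g_\alpha^{-1}\Pm_\alpha$. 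Multiplying by $-B(\lambda_\alpha)$ on the left, the two $B$-factors cancel and we are left with $-\Pm_\alpha g_\alpha^{-1}\Pm_\alpha$, as claimed. The main difficulty I anticipate is the bookkeeping in the previous paragraph: tracking the various projectors through the integration by parts, correctly deploying the algebraic identities for $T_\pm,T_\pm^\perp,a,b$, and especially the identification $\E_\alpha=\Ker A(\lambda_\alpha)^*$ via the Lagrange identity --- this is exactly what makes the Schur complement land in the correct subspaces and allows the $B$-factors to cancel in the final step.
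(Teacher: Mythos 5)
Your argument is correct and follows essentially the same route as the paper's Appendix: the Lagrange/Wronskian identities for the symmetry $m(\ol\lambda)=[m(\lambda)]^*$, the derivative-of-the-Wronskian computation relating the norming integral to $A'(\lambda_\alpha)$ (the paper's Lemma~\ref{auxiliary}(i)), the identification of $\Ker A(\lambda_\alpha)$ with $\E_\alpha$ via the boundary maps (the paper's Lemma~\ref{InitialCondMaps}), and a block-inversion argument for the simple pole (the paper's $Z_\alpha$ plays exactly the role of your Schur complement, since $A(\lambda)=-[W(\ol\lambda)]^*$). The only points worth tightening are that the passage from the quadratic-form identity $\langle Bh,A'h\rangle=\langle g_\alpha Bh,Bh\rangle$ to the operator equality is a polarization step, and that the $B'$-contribution at $x=0$ vanishes because $A(\lambda_\alpha)h=0$ (the identities for $a$ enter earlier, in rewriting the Wronskian through $\Gamma_-,\Gamma_-^\perp$), neither of which is a genuine gap.
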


\begin{proof}
The proof is quite standard and mimics~\cite{CKIsoSets}. See Appendix for details.
\end{proof}

In our sequel paper \cite{ChMat-II} we show that Weyl-Tichmarsh function residues are ``locally free parameters'': if any particular one of them is changed in an almost arbitrary way, the new sequence of residues corresponds to some vector-valued Sturm-Liouville operator from the same class. Nevertheless, the next Proposition contains an algebraic restriction that prevents completely arbitrary changes of $\{(\lambda_\alpha,\,\Pm_\alpha)\}_{\alpha\ge 0}$.

\begin{prop} \label{DivisionByWronsk}
\noindent (i) Let $\zeta(\lambda)$ be an entire matrix-valued function and \mbox{$\zeta(\lambda_\alpha)\Pm_\alpha=0$} for all $\alpha\geq0$. Then
the function $\zeta(\lambda)[(\Bcondp\Fm)(\lambda)]^{-1}$ is entire.

\smallskip
\noindent (ii) \label{NondegeneracyCond}
For all $V=V^*\in L^1([0,1];\C^{N\times N})$ the spectral data $\{(\lambda_\alpha,\,\Pm_\alpha,\,g_\alpha)\}_{\alpha\geq0}$ satisfy the following
\emph{algebraic nondegeneracy condition}:
\begin{quotation}
\noindent there exists no entire vector-valued function $\xi\colon\C\to\C^N$ except $\xi(\lambda)\equiv 0$ such that $\Pm_\alpha\xi(\lambda_\alpha)=0$ for all $\alpha\geq0$,
$\xi(\lambda)=O(e^{|\im\sqrt\lambda|})$ as $|\lambda|\to\infty$, and
\begin{equation} \label{NondegeneracyCondAsymptotics}
T_-^\perp\xi(x)=O(x^{-\frac{1}{2}}),~~T_-\xi(x)=O(x^{-1})~~\text{as}~~x\to+\infty.
\end{equation}
\end{quotation}
\end{prop}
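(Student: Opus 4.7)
Set $A(\lambda) := (\Bcondp\Fm)(\lambda)$, which is an entire $N\times N$ matrix-valued function satisfying $\Ker A(\lambda_\alpha) = \E_\alpha = \Ran\Pm_\alpha$ by the very definition of $\E_\alpha$. For (i), self-adjointness of $L$ implies that its resolvent has only simple poles at the eigenvalues; equivalently (as in the Green's function calculation behind Proposition~\ref{resMfunction} in the Appendix), $A(\lambda)^{-1}$ is meromorphic in $\C$ with only simple poles at $\{\lambda_\alpha\}$. Writing $A(\lambda)^{-1} = R_\alpha(\lambda-\lambda_\alpha)^{-1} + H_\alpha + O(\lambda-\lambda_\alpha)$ near $\lambda_\alpha$ and reading off the singular term of $A(\lambda)A(\lambda)^{-1} = I$ gives $A(\lambda_\alpha)R_\alpha = 0$, so $\Ran R_\alpha \subseteq \Ran\Pm_\alpha$ and $R_\alpha = \Pm_\alpha R_\alpha$. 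Under the hypothesis $\zeta(\lambda_\alpha)\Pm_\alpha = 0$ the residue of $\zeta(\lambda)A(\lambda)^{-1}$ at $\lambda_\alpha$ is $\zeta(\lambda_\alpha)\Pm_\alpha R_\alpha = 0$, so this product extends to an entire function and (i) follows.

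For (ii), given $\xi\not\equiv 0$ satisfying the hypotheses, set $\zeta(\lambda) := \xi(\bar\lambda)^*$. This is an entire row-vector-valued function, and since $\Pm_\alpha^* = \Pm_\alpha$ and $\lambda_\alpha\in\R$ one has $\zeta(\lambda_\alpha)\Pm_\alpha = (\Pm_\alpha\xi(\lambda_\alpha))^* = 0$. Part (i) then makes
\[
\phi(\lambda) := \xi(\bar\lambda)^*\,A(\lambda)^{-1}
\]
entire. The plan is to force $\phi \equiv 0$ by combining two estimates from the asymptotic analysis of $A(\lambda)^{-1}$ in Sect.~\ref{sec:W-1asymp}. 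On a sequence of circles $\{|\lambda| = r_n\}$, $r_n\to\infty$, chosen to stay uniformly away from the eigenvalues, Cramer's rule together with the exponential-type estimates for $\Fm$ yield $\|A(\lambda)^{-1}\| = O(|\lambda|^{M} e^{-|\im\sqrt\lambda|})$ for some fixed $M$; combined with $|\xi(\bar\lambda)^*| = O(e^{|\im\sqrt\lambda|})$ this gives $|\phi(\lambda)| = O(|\lambda|^M)$ on those circles, and the maximum principle then forces $\phi$ to be a polynomial of degree at most $M$.

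On the positive real axis (away from the eigenvalues) the $T_\pm$-block structure of $A(\lambda)^{-1}$ derived in Sect.~\ref{sec:W-1asymp} is, schematically,
\[
A(x)^{-1} \;\sim\; \begin{pmatrix} O(x^{-1/2}) & O(1)\\ O(1) & O(x^{1/2}) \end{pmatrix},\quad\text{rows }(T_-^\perp,T_-),\ \text{cols }(T_+^\perp,T_+),
\]
with the two diagonal orders coming from the Neumann--Neumann and the Dirichlet--Dirichlet sub-blocks respectively. Combining this with $T_-^\perp\xi(x) = O(x^{-1/2})$ and $T_-\xi(x) = O(x^{-1})$, an elementary block multiplication shows that both components of $\phi(x)$ tend to $0$ as $x\to+\infty$ along a sequence avoiding the eigenvalues; since a polynomial going to zero at $+\infty$ must vanish identically, $\phi\equiv 0$, whence $\xi(\bar\lambda)^* \equiv \phi(\lambda)A(\lambda) \equiv 0$, contradicting $\xi\not\equiv 0$. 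The heart of the argument is thus the sharp Wronskian-inverse asymptotics of Sect.~\ref{sec:W-1asymp}; in particular the $O(x^{1/2})$ growth of the Dirichlet--Dirichlet $T_-T_+$ block of $A^{-1}$ is exactly what forces the \emph{stronger} decay $T_-\xi = O(x^{-1})$ (rather than $O(x^{-1/2})$) in the hypothesis, and makes the decay rates in (\ref{NondegeneracyCondAsymptotics}) sharp.
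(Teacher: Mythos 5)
Your part (ii) is essentially the paper's own argument: divide by the Wronskian using (i), bound the resulting entire function polynomially on circles $|\lambda|=(\pi n+\beta)^2$ kept away from $\sigma^{\EuScript{T}}\cup\{\pi^2m^2,\ m\in\tfrac12\Z\}$, and play the block decay (\ref{NondegeneracyCondAsymptotics}) against the $O(x^{1/2})$ growth of the $T_-\,\cdot\,T_+$ block of $[W(x)]^{-1}$ on the real axis. Your choice $\zeta(\lambda)=\xi(\bar\lambda)^*$ in place of the paper's $\xi^\top$ is if anything cleaner, since $\zeta(\lambda_\alpha)\Pm_\alpha=0$ then follows directly from $\Pm_\alpha^*=\Pm_\alpha$ and $\lambda_\alpha\in\R$. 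The cruder circle bound $O(|\lambda|^{M})$, compensated by ``a polynomial tending to zero along a real sequence is identically zero,'' also works (the paper gets the sharp $O(\lambda^{1/2})$, hence a constant, and kills the constant the same way). Note only that a Cramer's-rule lower bound for $|\det W|$ on those circles secretly rests on the same comparison $W=W_0\,(I+W_0^{-1}(W-W_0))$ with $\|W_0^{-1}(W-W_0)\|<1$ that underlies (\ref{InverseW}), so you are not actually bypassing Lemma~\ref{WnolInverseLemma}.

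The one genuine gap is in part (i), at the assertion that self-adjointness of $L$ ``equivalently'' gives that $[W(\lambda)]^{-1}$ has only simple poles. That is precisely the nontrivial content of Lemma~\ref{auxiliary}(ii): one must show $\det Z_\alpha\ne0$ for $Z_\alpha=\dot W(\lambda_\alpha)\Pm_\alpha+W(\lambda_\alpha)\Pm_\alpha^\perp$, i.e.\ that the order of the zero of $\det W$ at $\lambda_\alpha$ equals $\dim\Ker W(\lambda_\alpha)$; the paper derives this from the positivity of $g_\alpha$ via the Wronskian identity $\{\Fm,\Fm\}=0$. Your route through the resolvent is viable but incomplete as stated: simplicity of the pole of the resolvent kernel, which is built from $\Fm(x,\lambda)[W(\lambda)]^{-1}[\Fp(y,\ol\lambda)]^*$, transfers to $[W(\lambda)]^{-1}$ only after one checks that a putative higher-order leading coefficient $C$ cannot be annihilated on both sides, i.e.\ that $\Fpm(\cdot,\lambda_\alpha)v\equiv0$ forces $v=0$ (true, because $\Fm(0,\lambda)v=T_-^\perp v$ and $\Fm'(0,\lambda)v=(T_-+aT_-^\perp)v$ cannot both vanish for $v\ne0$, and similarly at $x=1$). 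With simplicity granted, your derivation of $R_\alpha=\Pm_\alpha R_\alpha$ from $W(\lambda_\alpha)R_\alpha=0$ and the vanishing of the residue of $\zeta(\lambda)[W(\lambda)]^{-1}$ are correct.
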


\begin{proof}
The proof of (i) essentially follows~\cite{CKIsoSets}, see Appendix for details. In order to derive~(ii) from~(i), note that, if such a vector-valued function~$\xi(\lambda)$ exists, then the function $\xi^\top(\lambda)[(\Bcondp\Fm)(\lambda)]^{-1}$ is entire. Then asymptotics of the inverse Wronskian as $\lambda\to\infty$ and the Liouville theorem imply $\xi(\lambda)\equiv 0$, see Sect.~\ref{subsec:W-1asymp} for details.
\end{proof}

The main result of this paper is the following uniqueness theorem for operators (\ref{SLP}) with separated boundary conditions (\ref{BoundConditions}) of general type. Note that in the existing literature the Dirichlet boundary conditions ($T_-^\perp=T_+^\perp=0$ ) and the Neumann-type ones ($T_-=T_+=0$) are mostly explored though some methods can be used for general separated boundary conditions too, see \cite[p.1140]{Yu06a}.

\begin{thm}[\bf Uniqueness] \label{uniqueness}
For each pair of orthogonal projectors $T_\pm$ spectral data of the
Sturm-Liouville operator (\ref{SLP})--(\ref{BoundConditions}) determine the parameters \mbox{$a$} and \mbox{$b$} which are involved in the boundary conditions (\ref{BoundConditions}) and the potential \mbox{$V=V^*\in L^1([0,1];\C^{N\times N})$} uniquely.
In other words, if spectral data $(\lambda_\alpha,\,\Pm_\alpha,\,g_\alpha)$ and $(\wt{\lambda}_\alpha,\,\wt{\Pm}_\alpha,\,\wt{g}_\alpha)$ corresponding to $(V;a,b)$ and $(\wt V;\wt a,\wt{b})$, respectively, coincide for all $\alpha\geq0$, then  $(V;a,b)=(\wt V;\wt a,\wt{b})$.
\end{thm}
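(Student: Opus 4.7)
The plan follows the Poschel--Trubowitz scheme adapted to the matrix-valued, general-projector setting, and has two main stages.

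\emph{Stage 1: recovery of the Weyl--Titchmarsh function from the spectral data.} By Proposition \ref{resMfunction}, the spectral data $\{(\lambda_\alpha,\Pm_\alpha,g_\alpha)\}$ prescribe every pole and residue of $m(\lambda)$. Hence for two operators $(V;a,b)$ and $(\wt V;\wt a,\wt b)$ with identical spectral data, the difference $m-\wt m$ is an entire $N\times N$ matrix-valued function. The sharp asymptotics of $\Fpm(x,\lambda)$ together with those of the inverse Wronskian $[(\Bcondm\Fp)(\lambda)]^{-1}$, to be derived in Section \ref{sec:W-1asymp}, yield a decay estimate on $m(\lambda)$ along rays where $|\im\sqrt\lambda|\to\infty$. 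A Phragmen--Lindelof / Liouville argument, in the spirit of the one signalled for Proposition \ref{DivisionByWronsk}(ii), then forces $m\equiv\wt m$.

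\emph{Stage 2: recovery of $(V;a,b)$ from the Weyl--Titchmarsh function.} With $m=\wt m$ in hand, introduce the Weyl solution $\Psi(x,\lambda):=\Fp(x,\lambda)[(\Bcondm\Fp)(\lambda)]^{-1}$, satisfying $\Bcondp\Psi=0$ together with $\Bcondm\Psi=I$ and $\Bcondmd\Psi=-m(\lambda)$ at $x=0$; define $\wt\Psi$ analogously. Leading-order asymptotics of $m(\lambda)$ as $\lambda\to-\infty$ along the negative real axis force $a=\wt a$ and $b=\wt b$; after this identification the two boundary maps $\Bcondm,\Bcondmd,\Bcondp,\Bcondpd$ coincide for both problems, and the Cauchy data $(\Psi(0,\lambda),\Psi'(0,\lambda))$ agree with $(\wt\Psi(0,\lambda),\wt\Psi'(0,\lambda))$. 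The Lagrange identity applied to solutions of the two differential equations yields, after integration over $[0,1]$,
\begin{equation*}
\int_0^1 \wt\Psi(x,\ol\lambda)^{*}\bigl(V(x)-\wt V(x)\bigr)\Psi(x,\lambda)\,dx=0
\end{equation*}
for all non-spectral $\lambda$: the boundary terms at $x=0$ cancel by the Cauchy-data coincidence, and at $x=1$ by $\Bcondp\Psi=\Bcondp\wt\Psi=0$. A completeness argument for the bilinear system $\{\wt\Psi(x,\ol\lambda)^{*}\otimes\Psi(x,\lambda)\}$ in $L^1([0,1];\C^{N\times N})$, obtained by residue expansion at the eigenvalues together with the asymptotic control from Section \ref{sec:W-1asymp}, then forces $V=\wt V$ almost everywhere.

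\emph{Main obstacle.} The critical step is Stage 1: producing growth bounds on $m(\lambda)$ sharp enough to pin it down by its polar part. In the general vector-valued setting, the asymptotics of $[(\Bcondm\Fp)(\lambda)]^{-1}$ depend in a delicate way on the mutual position of the subspaces $H_\pm=\Ker T_\pm\subset\C^N$: the leading exponential rates as $|\im\sqrt\lambda|\to\infty$ are governed by the angles between $H_-$ and $H_+$, and several ``twisted'' blocks arising from nontrivial $T_-T_+$ components must be handled simultaneously. Making these asymptotics uniform along the contours required for Phragmen--Lindelof is exactly what Section \ref{sec:W-1asymp} prepares, and so the uniqueness proof is inseparable from the asymptotic analysis developed there.
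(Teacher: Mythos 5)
Your Stage~1 is sound and runs parallel to Proposition~\ref{computations}: one manufactures an entire function from the coincidence of the polar parts and kills it by Liouville's theorem on the contours $C_n^{(\beta)}$, using the inverse-Wronskian bounds of Lemma~\ref{WnolInverseLemma}. (One caveat: $m(\lambda)$ itself does not decay --- its Dirichlet-type blocks grow like $|\lambda|^{1/2}$ --- so the argument must be run on $m-\wt m$ after checking that the $T_\pm$-dependent leading terms cancel; this is the same cancellation the paper exploits when it writes $W(\lambda)[\wt W(\lambda)]^{-1}=I+O(\lambda^{-1/2})$.) Stage~2, however, has two genuine gaps. First, the claim that the asymptotics of $m(\lambda)$ as $\lambda\to-\infty$ force $b=\wt b$ is false: $b$ enters only through the values of $\Fp$ at $x=0$, and its contribution to $m(\lambda)$ on the negative real axis is exponentially small, $O(e^{-2\sqrt{-\lambda}})$, so it is invisible there. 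In the paper $b=\wt b$ is extracted by comparing the large-$\lambda$ expansions of \emph{both} $\Bcondp\Fm$ and $\Bcondpd\Fm$ on the contours $C_n^{(\beta)}$ --- identities (\ref{ABU=0}) and (\ref{AU=0}) --- where the $b$-dependence is only algebraically small; obtaining the second family requires the norming data $g_\alpha$ and Proposition~\ref{DivisionByWronsk}(i) again, not just $m=\wt m$. Without $b=\wt b$ your Lagrange-identity boundary term at $x=1$ does not vanish, since $\Bcondp\wt\Psi=T_+^\perp(b-\wt b)\wt\Psi(1,\cdot)\neq 0$.

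Second, the ``completeness of the bilinear system $\{\wt\Psi(x,\ol\lambda)^{*}\otimes\Psi(x,\lambda)\}$'' is asserted rather than proved, and it is essentially the entire difficulty of the theorem: completeness of products of solutions of two \emph{different} equations is the technical core of every Borg--Marchenko-type argument and requires its own machinery (transformation operators or an equivalent), which is nowhere set up in this paper. The paper avoids this step altogether. Having identified the Cauchy data of $\Fm$ and $\wt F_-$ at $x=1$ (Proposition~\ref{computations}(ii)), it forms the $2N\times 2N$ transfer matrix $K(x,\lambda)$ conjugating the two fundamental systems; $K$ is entire in $\lambda$ because $K(1,\lambda)=I$ and $K$ solves a first-order linear ODE in $x$ with entire coefficients, it is asymptotically block-triangular with identity diagonal blocks on the contours $C_n^{(\beta)}$, hence constant by Liouville, which yields $F_\pm\equiv\wt F_\pm$ and so $(V;a,b)=(\wt V;\wt a,\wt b)$ directly. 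You would need either to adopt this device or to supply a genuine proof of your completeness claim; as written, Stage~2 does not close.
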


\begin{proof}
See Sect.~\ref{sec:UniquenessPrf}.
\end{proof}

\section{Asymptotics of the inverse Wronskian as $\lambda\to\infty$\\
and the rough asymptotics of the spectrum}
\label{sec:W-1asymp}
\setcounter{equation}{0}

Let
\[
W(\lambda)=\{\Fp,\Fm\}(x,\lambda)=[\Fp(x,\ol{\lambda})]^*\Fm'(x,\lambda)-[\Fp'(x,\ol{\lambda})]^*\Fm(x,\lambda)
\]
denote the Wronskian of two fundamental solutions $\Fpm$ of the equation (\ref{DiffEquation}) satisfying the initial conditions (\ref{FpmBoundCond}). It is well known that the right hand side does not depend on $x$ (see Appendix), therefore,
\begin{align*}
W(\lambda)&=T_+^\perp\Fm'(1,\lambda)-(T_+\!-\!bT_+^\perp)\Fm(1,\lambda)=(\Bcondp\Fm)(\lambda) \cr
& = [\Fp(0,\ol{\lambda})]^*(T_-\!+\!aT_-^\perp)- [\Fp'(x,\ol{\lambda})]^*T_-^\perp= -[(\Bcondm\Fp)(\ol\lambda)]^*.
\end{align*}
The main goal of this Section is to compute asymptotics of the inverse Wronskian $[W(\lambda)]^{-1}$ as $\lambda\to\infty$. Since the residues of the Weyl-Titchmarsh function (\ref{MFunctDef}) are located exactly at zeros of $\det W(\lambda)$, along the way we also get rough asymptotics of the eigenvalues $\lambda_\alpha$ (see Proposition~\ref{PropositionRoughAsymp} below) and prove Proposition~\ref{DivisionByWronsk}(ii). Clearly, the Wronskian asymptotics should highly depend on the mutual geometry of projectors $T_\pm$ involved in the boundary conditions (\ref{BoundConditions}). Thus, we start with choosing a special ``coordinate system'' depending on $T_\pm$ in order to simplify the further computations.

\subsection{Coordinate system} For the sake of convenience from now onwards, if the letter $\Es$ with some indices stands for a subspace of $\C^N$, then the letter $\P$ with the same indices denotes the orthogonal projector $P:\C^N\to E$ onto this subspace.

\begin{dfn}  Let subspaces~$\Es_1,\Es_2\subset\C^N$ be the ranges of orthogonal projectors $\P_1,\P_2$ acting in $\C^N$, and $\dim\Es_1=\dim\Es_2$. We write 
$\angle(\Es_1,\Es_2)=\gamma\in(0,\tfrac{\pi}{2})$
if $\|P_2x_1\|=\cos\gamma\cdot \|x_1\|$ for all $x_1\in\Es_1$ and $\|P_1x_2\|=\cos\gamma\cdot\|x_2\|$ for all $x_2\in\Es_2$.
\end{dfn}

\begin{lem}
\label{LemmaDecomposition}
There exists a unique orthogonal decomposition 
\begin{equation} \label{OrthogonalDecomposition}
\C^N=\Es^{\EuScript{DD}}\oplus \Es^{\EuScript{ND}}\oplus \Es^{\EuScript{DN}}\oplus \Es^{\EuScript{NN}}\oplus\bigoplus\limits_{i=1}^k \Es[i],
\end{equation}
where
\[
\begin{aligned}
E^{\EuScript{DD}}&=\Ran T_-\cap \Ran T_+,& E^{\EuScript{ND}}&=\Ran T^\perp_-\cap \Ran T_+,\\
E^{\EuScript{DN}}&=\Ran T_-\cap \Ran T_+^\perp,& E^{\EuScript{NN}}&=\Ran T^\perp_-\cap \Ran T_+^\perp,
\end{aligned}
\]
such that $0<\gamma_1<\gamma_2<\ldots<\gamma_k<\frac\pi2$ and for all $i=1,\dots,k$ the following is fulfilled:
\[
\Es[i]=\Es[i]-+\Es[i]+, \quad \angle(\Es[i]-,\Es[i]+)=\gamma_i,\quad \text{where}\quad \Es[i]_\pm =\Es[i]\cap\Ran T_\pm.
\]
In particular, $\dim\Es[i]_-=\dim\Es[i]_+$ for all $i=1,\dots,k$.
\end{lem}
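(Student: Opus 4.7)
The plan is to reduce the entire decomposition to the spectral theory of the single self-adjoint operator $C:=T_+-T_-$. The key identity is that $C^2=T_-+T_+-T_-T_+-T_+T_-$ commutes with both $T_-$ and $T_+$: a direct expansion gives $T_-C^2=T_--T_-T_+T_-=C^2T_-$, and symmetrically for $T_+$. Consequently every eigenspace of $C^2$ is invariant under $T_\pm$ and $T_\pm^\perp$, and the orthogonal eigenspace decomposition of $C^2$ provides the skeleton of~(\ref{OrthogonalDecomposition}).

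Since $\|C\|\le 1$, the spectrum of $C^2$ lies in $[0,1]$, and I would first locate the extremal eigenvalues. The kernel $\Ker C^2=\{v:T_-v=T_+v\}$ splits according to this common eigenvalue as $\Es^{\EuScript{DD}}\oplus\Es^{\EuScript{NN}}$. On $\Ker(C^2-I)$ the self-adjoint operator $C$ has eigenvalues $\pm 1$; writing $v=T_-v+T_-^\perp v$, the equation $Cv=v$ forces $T_+v=2T_-v+T_-^\perp v$ and thus $\|T_+v\|^2=4\|T_-v\|^2+\|T_-^\perp v\|^2\le\|v\|^2$, which gives $T_-v=0$ and hence $v\in\Es^{\EuScript{ND}}$; symmetrically, $Cv=-v$ yields $v\in\Es^{\EuScript{DN}}$. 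The remaining eigenvalues of $C^2$ lie in $(0,1)$; I label them $\sin^2\gamma_1<\dots<\sin^2\gamma_k$ with $\gamma_i\in(0,\tfrac\pi2)$ and set $\Es[i]:=\Ker(C^2-\sin^2\gamma_i\cdot I)$, $\Es[i]_\pm:=\Es[i]\cap\Ran T_\pm$.

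For $v\in\Es[i]-$ one combines $T_-v=v$ with $C^2v=\sin^2\gamma_iv$ to obtain $T_-T_+v=\cos^2\gamma_iv$, whence $\|T_+v\|^2=\langle v,T_-T_+v\rangle=\cos^2\gamma_i\|v\|^2$. Since $T_+v\in\Es[i]+$ (by $T_+$-invariance of $\Es[i]$) and $T_+$ agrees with the orthogonal projector onto $\Es[i]+$ on vectors of $\Es[i]-$ (as $\Es[i]+\subset\Ran T_+$), this is precisely the required angle identity; the symmetric statement on $\Es[i]+$ is proved analogously. The bijection $T_+\colon\Es[i]-\to\Es[i]+$ (injective by $\|T_+v\|=\cos\gamma_i\|v\|$, surjective via the dual relation $T_+T_-w=\cos^2\gamma_iw$ for $w\in\Es[i]+$) yields $\dim\Es[i]-=\dim\Es[i]+$.

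The main obstacle is the identity $\Es[i]=\Es[i]-+\Es[i]+$. Using $T_-$-invariance, $\Es[i]=\Es[i]-\oplus(\Es[i]\cap\Ran T_-^\perp)$, so it suffices to show $\dim(\Es[i]\cap\Ran T_-^\perp)=\dim\Es[i]-$. The plan is to exhibit two injections in opposite directions: $v\mapsto T_-^\perp T_+v$, which sends a nonzero $v\in\Es[i]-$ to a vector of norm $\cos\gamma_i\sin\gamma_i\|v\|\ne 0$ in $\Es[i]\cap\Ran T_-^\perp$; and $w\mapsto T_-T_+w$, the vanishing of which on $w\in\Es[i]\cap\Ran T_-^\perp$ would force $T_+w=\sin^2\gamma_iw$ (derived from $C^2w=\sin^2\gamma_iw$ together with $T_-w=0$), which is impossible since $T_+$ has only the eigenvalues $0$ and $1$. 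Combined with $\Es[i]-\cap\Es[i]+\subset\Ran T_-\cap\Ran T_+=\Es^{\EuScript{DD}}\perp\Es[i]$, the dimension count then forces $\Es[i]-+\Es[i]+$ (a non-orthogonal direct sum) to exhaust $\Es[i]$. Uniqueness of the decomposition is immediate: every summand is intrinsically defined from $T_\pm$ as either an intersection of ranges of $T_\pm,T_\pm^\perp$ or as an eigenspace of the canonically defined operator $C^2$, and the angles $\gamma_1<\dots<\gamma_k$ are fixed by the ordered spectrum of $C^2$ in $(0,1)$.
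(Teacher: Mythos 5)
Your proof is correct, but it takes a genuinely different route from the paper's. You build the decomposition from the spectral theory of $C^2=(T_+-T_-)^2$, exploiting that this operator commutes with both $T_-$ and $T_+$, so that every eigenspace is automatically invariant under both projectors; the extremal eigenvalues $0$ and $1$ are identified with $E^{\EuScript{DD}}\oplus E^{\EuScript{NN}}$ and $E^{\EuScript{ND}}\oplus E^{\EuScript{DN}}$, and the key identity $E^{[\gamma_i]}=E^{[\gamma_i]}_-+E^{[\gamma_i]}_+$ is obtained by a dimension count via the two injections $v\mapsto T_-^\perp T_+v$ and $w\mapsto T_-T_+w$. The paper instead applies the Hilbert--Schmidt theorem directly to $T_+T_-T_+$ and transports its eigenvectors $h^{[i]}_j$ to the vectors $e^{[i]}_j=(\cos\gamma_i)^{-1}T_-T_+h^{[i]}_j$, i.e.\ it performs a singular value decomposition of $T_-T_+$; the identity $E^{[\gamma_i]}=E^{[\gamma_i]}_-+E^{[\gamma_i]}_+$ then comes from identifying $(\Ran T_+T_-T_+ +\Ran T_-T_+T_-)^\perp$ with $E^{\EuScript{ND}}\oplus E^{\EuScript{DN}}\oplus E^{\EuScript{NN}}$. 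Both are standard incarnations of the two-subspaces-in-general-position theory; yours is arguably cleaner for the decomposition statement itself because all invariance comes for free, whereas the paper's construction has the side benefit of producing the explicit orthonormal bases $h^{[i]}_j$, $e^{[i]}_j$ that are reused later (e.g.\ in the diagonalization of $J_{22}(\lambda)$ in Lemma 3.4).

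One point to tighten: your uniqueness argument only shows that the decomposition you \emph{construct} is canonical; it does not yet explain why an \emph{arbitrary} decomposition satisfying the hypotheses of the lemma must have its summands equal to eigenspaces of $C^2$. For that you still need to check that the stated properties force $T_\mp=P^{\EuScript{DD}}+P^{\EuScript{DN}\,\text{or}\,\EuScript{ND}}+\sum_i P^{[\gamma_i]}_\mp$, from which $C^2$ (or, as in the paper, $T_\mp T_\pm T_\mp$) acquires a spectral representation determined by the given decomposition, and the spectral theorem finishes the job. The paper is also terse here, but it does at least reduce uniqueness to these identities; a sentence of this kind would close the gap in your write-up.
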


\begin{rem} \label{RemarkEmEp}
Below we need the notation
\begin{equation*} \label{EmAndEp}
\Es-=\bigoplus\limits_{i=1}^k \Es[i]-,\qquad \Es+=\bigoplus\limits_{i=1}^k \Es[i]+,
\qquad \Es^{\EuScript{T}}=
\bigoplus\limits_{i=1}^k \Es[i].
\end{equation*}
and the corresponding notation for the orthogonal projectors $\PEpm$, $P^{\EuScript{T}}$ onto $\Es_\pm$, $\Es^{\EuScript{T}}$. We also
use the orthogonal projectors $\PEpm^\perp=P^{\EuScript{T}}-P_\pm$ onto the subspaces $\Es_\pm^\perp=\Es^{\EuScript{T}}\ominus\Es_\pm$. It is easy to see that
\begin{equation}
\label{rankPmPp}
\rank\PEp=\rank\PEm=\rank\PEp\PEm=\rank\PEp\PEm^\perp=\rank\PEp^\perp\PEm=\rank\PEp^\perp\PEm^\perp.
\end{equation}
\end{rem}

\begin{proof}
The Hilbert-Schmidt theorem applied to the self-adjoint operator $\Tp\Tm\Tp$ gives
\begin{equation} \label{TpTmTpSpectralDecompose}
\Tp\Tm\Tp x=
\sum\limits_{i=0}^k\sum\limits_{j=1}^{l_i} \cos^2\gamma_i\cdot\langle x, \hb[i]_j\rangle \hb[i]_j,\quad x\in\C^N,
\end{equation}
where $\lambda_i=\cos^2\gamma_i$ denote the nonzero eigenvalues of $\Tp\Tm\Tp$ (for the sake of convenience, we formally include $\gamma_0=0$ into this sum even if $1$ is not an eigenvalue), and $\hb[i]_j$ are the orthonormal eigenvectors corresponding to $\lambda_i$ (we set $l_0=0$ if $\|\Tp\Tm\Tp\|<1$).

As $\Tp\Tm\Tp=(\Tm\Tp)^*(\Tm\Tp)$, it's easy to see that the vectors
\begin{equation} \label{EigenvectorTmTp}
\eb[i]_j={(\cos\gamma_i)^{-1}}\cdot {\Tm\Tp \hb[i]_j}
\end{equation}
form an orthonormal basis in $\Ran \Tm\Tp = (\Ker \Tp\Tm)^\perp$ and 
\[
\Tp\Tm x=\sum\limits_{i=0}^k\sum\limits_{j=1}^{l_i} \cos\gamma_i\cdot \langle x, \eb[i]_j\rangle \hb[i]_j,\quad x\in\C^N.
\]
Therefore,
\begin{equation*}\begin{split}
&\Tm\Tp x=(\Tp\Tm)^*x=\sum\limits_{i=0}^k\sum\limits_{j=1}^{l_i}\cos\gamma_i\cdot\langle x, \hb[i]_j\rangle \eb[i]_j,\quad x\in\C^N, \\
&\Tm\Tp\Tm x=\sum\limits_{i=0}^k\sum\limits_{j=1}^{l_i}\cos^2\gamma_i\cdot\langle x, \eb[i]_j\rangle \eb[i]_j,\quad x\in\C^N.
\end{split}\end{equation*}
Hence, the operators $\Tm\Tp\Tm$ and $\Tp\Tm\Tp$ have the same spectra and
$\eb[i]_j$ are the orthonormal eigenvectors of $\Tm\Tp\Tm$ corresponding to the
eigenvalue $\cos^2\gamma_i$. 

Given $i=0,\dots,k$, let $\Es[i]-$ and $\Es[i]+$ denote the eigenspaces of $\Tm\Tp\Tm$ and $\Tp\Tm\Tp$,
respectively, corresponding to the eigenvalue $\cos^2\gamma_i$. It is clear that \mbox{$\Es[0]_-\!=\Es[0]_+\!=\Es^{\EuScript{DD}}$},  \mbox{$\Es[i]_\mp\subset\Ran T_\mp$} and \mbox{$\Es[i]_\mp\cap\Ran T_\pm=\{0\}$} for $i=1,\dots,k$. The identities (\ref{TpTmTpSpectralDecompose}) and (\ref{EigenvectorTmTp}) imply
\[
\cos^2\gamma_i\cdot \delta_{ij}\delta_{sl}=\langle \Tp\Tm\Tp \hb[i]_s,\hb[j]_l\rangle=
\langle \Tm\Tp \hb[i]_s,\hb[j]_l\rangle=\cos\gamma_i\cdot \langle \eb[i]_s, \hb[j]_l\rangle,
\]
where $\delta_{ij}$ and $\delta_{sl}$ are the Kronecker deltas. Hence, $\angle(\Es[i]-,\Es[i]+)=\gamma_i$ and
\[
\Ran \Tp\Tm\Tp + \Ran \Tm\Tp\Tm=\Es^{\EuScript{DD}}\oplus\bigoplus\limits_{i=1}^k \Es[i], \quad \Es[i]=\Es[i]- +\Es[i]+. 
\]
To finish the proof of (\ref{OrthogonalDecomposition}) it is sufficient to check that
\[
(\Ran \Tp\Tm\Tp + \Ran \Tm\Tp\Tm)^\perp=
\Ker \Tm\Tp\cap\Ker \Tp\Tm = E^{\EuScript{ND}}\oplus E^{\EuScript{DN}}\oplus E^{\EuScript{NN}}.
\]
By definition, $E^{\EuScript{ND}}\oplus E^{\EuScript{DN}}\oplus E^{\EuScript{NN}}\subset \Ker \Tm\Tp\cap\Ker \Tp\Tm$. At the same time, if  $\Tm\Tp h=\Tp\Tm h=0$, then one has the decomposition
\[
h=\Tp h + \Tm h + (h\!-\!\Tp h\!-\Tm h) = h^{\EuScript{ND}}+ h^{\EuScript{DN}}+ h^{\EuScript{NN}}
\]
where $h^{\EuScript{ND}}\in \Ker\Tm\cap\Ran\Tp=E^{\EuScript{ND}}$, $h^{\EuScript{DN}}\in E^{\EuScript{DN}}$, and \mbox{$h^{\EuScript{NN}}\in\Ker\Tm\cap\Ker\Tp=E^{\EuScript{NN}}$}.

In order to prove the uniqueness of the decomposition (\ref{OrthogonalDecomposition}) note that the identities
$\Tm=\P^{\EuScript{DD}}+\P^{\EuScript{DN}}+\sum_{i=0}^k \P[i]-$,
$\Tp=\P^{\EuScript{DD}}+\P^{\EuScript{ND}}+\sum_{i=0}^k \P[i]+$ imply
the representations
\[
\Tm\Tp\Tm=\P^{\EuScript{DD}}+\sum\limits_{i=0}^k \cos^2\gamma_i\cdot\P[i]-,\qquad
\Tp\Tm\Tp=\P^{\EuScript{DD}}+\sum\limits_{i=1}^k\cos^2\gamma_i\cdot\P[i]+,
\]
which are unique due to the spectral theorem.
\end{proof}

\subsection{Asymptotics of the fundamental solutions and $W(\lambda)$}
 Repeating the standard arguments (see \cite{PT}, \mbox{p.~13-15}) one obtains the following (uniform on $x\in [0,1]$ and bounded subsets of $V\in L^1([0,1];\C^{N\times N})$) asymptotics as $|\lambda|\to\infty$:

\begin{equation}
\label{asymptoticsFminus} {\small
\begin{split}
&\Fm(x,\lambda)=\left[\frac{\sin(\sqrt\lambda x)}{\sqrt\lambda} I- \frac{\cos(\sqrt\lambda
x )}{2\lambda}\int\limits_0^x V(t)dt +
o\left(\frac{e^{|\im\sqrt\lambda|x}}{\lambda}\right)\right]T_- \\
&\quad\quad\quad\quad\quad +\!\left[\cos(\sqrt{\lambda}x)I+\frac{\sin(\sqrt{\lambda}x)}{\sqrt\lambda}
\left(a+\frac 12 \int\limits_0^x V(t)dt\right)+
o\left(\frac{e^{|\im\sqrt\lambda|x}}{\sqrt\lambda}\right) \right]T_-^\perp,\\
&\Fm'(x,\lambda)=\left[\cos(\sqrt\lambda x) I+ \frac{\sin(\sqrt\lambda
x)}{2\sqrt\lambda}\int\limits_0^x V(t) dt
+o\left(\frac{e^{|\im\sqrt\lambda|x}}{\sqrt\lambda}\right)\right]T_-\\
&\quad\quad\quad\quad\quad +\!\left[-\sqrt\lambda\sin(\sqrt\lambda x)I+\cos(\sqrt\lambda x) \left(
a+\frac 12\int\limits_0^x V(t) dt\right)+
o\left({e^{|\im\sqrt\lambda|x}}\right)\right]T_-^\perp,
\end{split}}
\end{equation}
\begin{equation}
\label{asymptoticsFplus} {\small
\begin{split}
&\Fp (x,\lambda)=\left[-\frac{\sin(\sqrt\lambda (1\!-\!x))}{\sqrt\lambda} I+
\frac{\cos(\sqrt\lambda(1\!-\!x))}{2\lambda}\int\limits_x^1 V(t)dt +
o\left(\frac{e^{|\im\sqrt\lambda|(1-x)}}{\lambda}\right)\right]T_+ \\
&+\!\left[\cos(\sqrt{\lambda}(1\!-\!x))I+\frac{\sin(\sqrt{\lambda}(1\!-\!x))}{\sqrt\lambda}
\left(\frac 12 \int\limits_x^1 V(t)dt+b\right)+
o\left(\frac{e^{|\im\sqrt\lambda|(1-x)}}{\sqrt\lambda}\right) \right]T_+^\perp,\\
&\Fp'(x,\lambda)=\left[\cos(\sqrt\lambda (1\!-\!x)) I+ \frac{\sin(\sqrt\lambda
(1\!-\!x))}{2\sqrt\lambda}\int\limits_x^1 V(t) dt
+o\left(\frac{e^{|\im\sqrt\lambda|(1-x)}}{\sqrt\lambda}\right)\right]T_+  \\
&+\!\left[\sqrt\lambda\sin(\sqrt\lambda (1\!-\!x))I-\cos(\sqrt\lambda (1\!-\!x))
\left( \frac 12\int\limits_x^1 V(t) dt+b\right)+
o\left({e^{|\im\sqrt\lambda|(1-x)}}\right)\right]T_+^\perp.\\
\end{split} }\end{equation}
In particular,
\[
\begin{aligned}
W(\lambda)=(-\sqrt\lambda\sin\sqrt\lambda+O(e^{|\im\sqrt{\lambda}|}))\cdot\Tp^\perp\Tm^\perp &+ (\cos\sqrt\lambda+O(|\lambda|^{-\frac{1}{2}}e^{|\im\sqrt{\lambda}|}))\cdot\Tp^\perp\Tm \\
+(-\cos\sqrt\lambda+O(|\lambda|^{-\frac{1}{2}}e^{|\im\sqrt{\lambda}|}))\cdot\Tp\Tm^\perp & +(-\tfrac{\sin\sqrt\lambda}{\sqrt\lambda}+O(|\lambda|^{-1}e^{|\im\sqrt{\lambda}|}))\cdot \Tp\Tm\,.
\end{aligned}
\]
It is more convenient to use the matrix notation, so we rewrite $W(\lambda)$ as
\begin{equation} \label{WAsympthotic}
W(\lambda)=W_0(\lambda)+\qForm{\Tp^\perp}{\Tp}{O(1)}{O(\lambda^{-\frac{1}{2}})}
{O(\lambda^{-\frac{1}{2}})}{O(\lambda^{-1})}{\Tm^\perp}{\Tm}\cdot e^{|\im\sqrt{\lambda}|},
\end{equation}
where
\[
W_0(\lambda)=-\sqrt\lambda\sin\sqrt\lambda\cdot\Tp^\perp\Tm^\perp +\cos\sqrt\lambda\cdot\Tp^\perp\Tm-\cos\sqrt\lambda\cdot\Tp\Tm^\perp-\tfrac{\sin\sqrt\lambda}{\sqrt\lambda}\cdot \Tp\Tm\,.
\]
One should be careful when writing down the leading terms: in (\ref{WAsympthotic}), blocks in the error term are not quadratic if no special assumptions are imposed on the ranks of $T_\pm$. In order to handle this situation properly, we introduce the following notation: let
 \begin{equation} \label{Id}
 I=\qForm{\PEp^\perp}{\PEp}{I_{11}}{I_{12}}{I_{21}}{I_{22}}{\PEm^\perp}{\PEm}\colon
 \Es^{\EuScript{T}}\to\Es^{\EuScript{T}}
\end{equation}
be the identical map (recall that $\Es^{\EuScript{T}}=\Em+\Ep$ but this sum is \emph{not} orthogonal, see Lemma~\ref{LemmaDecomposition} and Remark~\ref{RemarkEmEp}), where
\[
\begin{aligned}
 I_{11}&=\PEp^\perp \PEm^\perp\colon \Em^\perp\to \Ep^\perp,&\quad
 I_{12}&=\PEp^\perp \PEm\colon \Em\to \Ep^\perp, \\
 I_{21}&=\PEp \PEm^\perp\colon \Em^\perp\to \Ep,&\quad
 I_{22}&=\PEp \PEm\colon \Em\to \Ep.
\end{aligned}
\]
It follows from (\ref{rankPmPp}) that {all operators $I_{11},I_{12},I_{21},I_{22}$ are invertible}. Recall that
\[
\begin{pmatrix} T_+^\perp \\ T_+ \end{pmatrix} = \begin{pmatrix} P^{\EuScript{DN}}+P^{\EuScript{NN}}+P_+^\perp \\ P^{\EuScript{DD}}+P^{\EuScript{ND}}+P_+\end{pmatrix},\qquad \begin{pmatrix} T_-^\perp \\ T_- \end{pmatrix} = \begin{pmatrix} P^{\EuScript{ND}}+P^{\EuScript{NN}}+P_-^\perp \\ P^{\EuScript{DD}}+P^{\EuScript{DN}}+P_+\end{pmatrix}.
\]
Using this notation the unperturbed Wronskian can be rewritten as
\begin{equation} \begin{split}\label{Wnol}
W_0(\lambda)= -\sqrt\lambda\sin\sqrt\lambda\cdot P^{\EuScript{NN}}-\cos\sqrt\lambda \cdot P^{\EuScript{ND}}
+\cos\sqrt\lambda\cdot P^{\EuScript{DN}}-\frac{\sin\sqrt\lambda}{\sqrt\lambda} \cdot P^{\EuScript{DD}} & \\+
\qForm{\PEp^\perp}{\PEp}
{-\sqrt\lambda\sin\sqrt\lambda\cdot I_{11}}{\cos\sqrt\lambda\cdot I_{12}}
{-\cos\sqrt\lambda \cdot I_{21}}{-\frac{\sin\sqrt\lambda}{\sqrt\lambda} \cdot I_{22} }
{\PEm^\perp}{\PEm}&.
\end{split} \end{equation}

\subsection{Inverse Wronskian and the rough asymptotics of the spectrum}
\label{subsec:W-1asymp}

\begin{lem} \label{WnolInverseLemma}
The inverse matrix to $W_0(\lambda)$ has the representation
\begin{equation} \label{WnolInverse}
\begin{split}
[W_0(\lambda)]^{-1}= -\tfrac{1}{\sqrt\lambda\sin\sqrt\lambda} \cdot P^{\EuScript{NN}}-\tfrac{1}{\cos\sqrt\lambda} \cdot P^{\EuScript{ND}}
+\tfrac{1}{\cos\sqrt\lambda}\cdot P^{\EuScript{DN}}-\tfrac{\sqrt\lambda}{\sin\sqrt\lambda} \cdot P^{\EuScript{DD}} &\\
+ \qForm{\PEm^\perp}{\PEm}
{-\tfrac{1}{\sqrt\lambda\sin\sqrt\lambda}\cdot J_{11}(\lambda)}
{-\tfrac{\cos\sqrt\lambda}{\sin^2\sqrt\lambda}\cdot J_{12}(\lambda)}
{\tfrac{\cos\sqrt\lambda}{\sin^2\sqrt\lambda}\cdot J_{21}(\lambda)}
{-\tfrac{\sqrt\lambda}{\sin{\sqrt\lambda}}\cdot J_{22}(\lambda)}
{\PEp^\perp}{\PEp} &,
\end{split}
\end{equation}
where the matrix-valued functions $J_{ij}(\lambda)$ acting from $\Ep^\perp$ or $\Ep$ to $\Em^\perp$ or $\Em$, respectively, are analytic for all $\lambda\in\C$ except the set $\sigma^\EuScript{T}=\{(\pi n\pm\gamma_i)^2,n\geq 0, i=1,\dots,k\}$. Moreover, the following uniform estimates are fulfilled for all $\lambda\in\mathbb{C}\setminus \sigma^\EuScript{T}$:
\begin{equation}
\label{JijEstim}
\|J_{11}(\lambda)\|,\|J_{12}(\lambda)\|,\|J_{21}(\lambda)\|,\|J_{22}(\lambda)\|= O \biggl(1+\frac{|\lambda|^{\frac{1}{2}}}{\dist(\lambda;\sigma^\EuScript{T})}\biggr).
\end{equation}
In particular, for each fixed constant $\beta\ne\pm\gamma_1,\dots,\pm\gamma_k\mod \pi$,
the norms $\|J_{ij}(\lambda)\|$ are uniformly bounded on the contours
$C_n^{(\beta)}=\{\lambda:|\lambda|=(\pi n+\beta)^2\}$ as $n\to\infty$.
\end{lem}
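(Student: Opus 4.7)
The plan is to exploit the orthogonal decomposition~(\ref{OrthogonalDecomposition}) provided by Lemma~\ref{LemmaDecomposition}: each of its five summands is preserved by both $T_\pm$ and $T_\pm^\perp$, hence also by $W_0(\lambda)$. A direct inspection of~(\ref{Wnol}) shows that on $E^{\EuScript{NN}}$, $E^{\EuScript{ND}}$, $E^{\EuScript{DN}}$, $E^{\EuScript{DD}}$ the operator $W_0(\lambda)$ acts as the scalars $-\sqrt\lambda\sin\sqrt\lambda$, $\cos\sqrt\lambda$, $-\cos\sqrt\lambda$, $-\sin\sqrt\lambda/\sqrt\lambda$ respectively, whose reciprocals reproduce the first four terms of~(\ref{WnolInverse}) immediately. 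All of the nontrivial work is therefore concentrated on~$E^{\EuScript{T}}$.

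To invert $W_0(\lambda)$ on $E^{\EuScript{T}}=\bigoplus_i E^{[\gamma_i]}$ I would refine the decomposition one step further, using the orthonormal systems $\{e^{[\gamma_i]}_j\}$ of $E^{[\gamma_i]}_-$ and $\{h^{[\gamma_i]}_j\}$ of $E^{[\gamma_i]}_+$ constructed in the proof of Lemma~\ref{LemmaDecomposition}, which satisfy $\langle e^{[\gamma_i]}_j,h^{[\gamma_i]}_l\rangle=\cos\gamma_i\cdot\delta_{jl}$. The two-dimensional planes $V^{[i]}_j:=\mathrm{span}(e^{[\gamma_i]}_j,h^{[\gamma_i]}_j)$ are then mutually orthogonal and invariant under $T_\pm$ (hence under $W_0(\lambda)$), and on each of them $W_0(\lambda)|_{V^{[i]}_j}$ is an entirely explicit $2\times 2$ matrix whose entries depend only on~$\gamma_i$ and on $\sqrt\lambda\sin\sqrt\lambda$, $\cos\sqrt\lambda$, $\sin\sqrt\lambda/\sqrt\lambda$. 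A short calculation yields
\[
\det W_0(\lambda)\big|_{V^{[i]}_j}=\cos^2\gamma_i-\cos^2\sqrt\lambda=\sin(\sqrt\lambda-\gamma_i)\sin(\sqrt\lambda+\gamma_i),
\]
which vanishes precisely on~$\sigma^{\EuScript{T}}$. Inverting each block via the adjugate formula, re-expressing the result in the (non-aligned) source frame $E_+^\perp\oplus E_+$ and target frame $E_-^\perp\oplus E_-$, and pulling out the prescribed scalar prefactors produces~(\ref{WnolInverse}), with every entry of $J_{ij}(\lambda)|_{V^{[i]}_j}$ coming out as an appropriate scalar multiple of $\sin^2\sqrt\lambda/(\cos^2\gamma_i-\cos^2\sqrt\lambda)$. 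In particular, each $J_{ij}(\lambda)$ is meromorphic on~$\C$ with poles contained in~$\sigma^{\EuScript{T}}$.

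The estimate~(\ref{JijEstim}) is then reduced to a uniform bound on the single ratio $|\sin^2\sqrt\lambda|/|\sin(\sqrt\lambda-\gamma_i)\sin(\sqrt\lambda+\gamma_i)|$. Combining the standard lower bound $|\sin\zeta|\gtrsim\min\{1,\dist(\re\zeta,\pi\Z)\}\cdot e^{|\im\zeta|}$, applied with $\zeta=\sqrt\lambda\pm\gamma_i$, with the equivalence $\dist(\lambda,\sigma^{\EuScript{T}})\asymp|\lambda|^{1/2}\cdot\dist(\sqrt\lambda,\pm\gamma_i+\pi\Z)$ valid for large~$|\lambda|$, one obtains exactly the right-hand side of~(\ref{JijEstim}). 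On the contour $C_n^{(\beta)}$ with $\beta\not\equiv\pm\gamma_i\pmod{\pi}$, this condition forces $\dist(\sqrt\lambda,\pm\gamma_i+\pi\Z)\ge\min_i|\beta\mp\gamma_i|>0$ uniformly in~$n$, whence $\dist(\lambda,\sigma^{\EuScript{T}})\gtrsim|\lambda|^{1/2}$ and every $J_{ij}(\lambda)$ is $O(1)$. The principal obstacle I anticipate is purely computational: carrying out the $2\times 2$ inversion consistently in two non-orthogonal frames on $V^{[i]}_j$ and verifying that the four distinct scalar prefactors in~(\ref{WnolInverse}) can indeed be pulled out uniformly in~$(i,j)$.
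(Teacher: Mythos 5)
Your proof is correct in substance and follows essentially the same route as the paper's: both exploit the invariant decomposition of Lemma~\ref{LemmaDecomposition}, invert $W_0(\lambda)$ trivially on $E^{\EuScript{DD}}\oplus E^{\EuScript{ND}}\oplus E^{\EuScript{DN}}\oplus E^{\EuScript{NN}}$, and reduce everything on the twisted part to the single scalar ratio $|\sin\sqrt\lambda|^2/|\sin(\sqrt\lambda-\gamma_i)\sin(\sqrt\lambda+\gamma_i)|$. The only organizational difference is that the paper inverts the $2\times 2$ block form (\ref{Wnol}) by the Frobenius formula, obtaining $J_{11}=[I_{11}+\cot^2\sqrt\lambda\cdot I_{12}I_{22}^{-1}I_{21}]^{-1}$ etc., and only afterwards diagonalizes $J_{22}$ in the basis $h^{[\gamma_i]}_j$, whereas you pass directly to the two-dimensional invariant planes $V^{[i]}_j$ and use the adjugate; the two computations produce the same diagonal entries $[\cos\gamma_i-\cot^2\sqrt\lambda\cdot\sin^2\gamma_i/\cos\gamma_i]^{-1}$ and the same determinant $\sin(\sqrt\lambda-\gamma_i)\sin(\sqrt\lambda+\gamma_i)$ per plane, so nothing essential is gained or lost either way.

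Two points in your estimate need attention. First, make explicit (as the paper does) that the two distances $\dist(\sqrt\lambda-\gamma_i;\pi\Z)$ and $\dist(\sqrt\lambda+\gamma_i;\pi\Z)$ cannot be small simultaneously, which uses $0<\gamma_i<\tfrac\pi2$; without this remark the denominator bound yields a \emph{product} of two reciprocal distances, i.e.\ the square of the right-hand side of (\ref{JijEstim}). Second, and more seriously, your lower bound $|\sin\zeta|\gtrsim\min\{1,\dist(\re\zeta;\pi\Z)\}\,e^{|\im\zeta|}$, with the distance measured from the \emph{real part} of $\zeta$, is too weak to give the uniform estimate: for $\sqrt\lambda=\gamma_i+\pi m+it$ with $t$ large (such points occur both in $\C\setminus\sigma^{\EuScript{T}}$ and on the contours $C_n^{(\beta)}$) your bound degenerates while the true ratio and the right-hand side of (\ref{JijEstim}) remain bounded. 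You must use the estimate (\ref{SinEstim}) with $\dist(\zeta;\pi\Z)$ for the complex point $\zeta=\sqrt\lambda\pm\gamma_i$ itself. With that correction the remainder of your argument, including the conversion $[\dist(\sqrt\lambda\mp\gamma_i;\pi\Z)]^{-1}=O\bigl((1+|\lambda|^{1/2})/\dist(\lambda;\sigma^{\EuScript{T}})\bigr)$ and the uniform boundedness on $C_n^{(\beta)}$, goes through exactly as in the paper.
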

\begin{proof}
Applying the Frobenius formula (e.g., see \cite[Chapter~2.5]{Gant})
\[ 
\begin{pmatrix} m_{11} & m_{12}\\m_{21}&m_{22} \end{pmatrix}^{-1}=
\begin{pmatrix} (m_{11}\!-\!m_{12}m_{22}^{-1}m_{21})^{-1}&-(m_{11}\!-\!m_{12}m_{22}^{-1}m_{21})^{-1}m_{12}m_{22}^{-1}\\
-(m_{22}\!-\!m_{21}m_{11}^{-1}m_{12})^{-1}m_{21}m_{11}^{-1}&(m_{22}\!-\!m_{21}m_{11}^{-1}m_{12})^{-1}\end{pmatrix}
\] 
to (\ref{Wnol}) one obtains (\ref{WnolInverse}) with
\[
\begin{array}{ll}
J_{11}(\lambda)=[I_{11}+\cot^2\sqrt\lambda\cdot I_{12}I_{22}^{-1}I_{21}]^{-1}, & J_{12}(\lambda)=-J_{11}(\lambda)I_{12}I_{22}^{-1},\\
J_{21}(\lambda)=-J_{22}(\lambda)I_{21}I_{11}^{-1}, & J_{22}(\lambda)=[I_{22}+\cot^2\sqrt\lambda\cdot I_{21}I_{11}^{-1}I_{12}]^{-1}.
\end{array}
\]
Since the operators $I_{11}^{-1}$, $I_{22}^{-1}$, $I_{12}$ and $I_{21}$ are bounded, it is sufficient to estimate $\|J_{11}(\lambda)\|$ and $\|J_{22}(\lambda)\|$. We prove the inequality (\ref{JijEstim}) for the latter, the proof for the former is similar. Using the orthogonal basis $\hb[i]_j$ of the subspace $\Ep$ introduced in the proof of Lemma~\ref{LemmaDecomposition} it is easy to see that
\[
J_{22}(\lambda)x=\sum_{i=1}^k\sum_{j=1}^{l_i} [\cos\gamma_i -\cot^2\sqrt{\lambda}\cdot \tfrac{\sin^2\gamma_i}{\cos\gamma_i}]^{-1}\cdot \langle x, \hb[i]_j\rangle \hb[i]_j,\quad x\in\Ep.
\]
Hence,
\[
\|J_{22}(\lambda)\|=\max_{i=1,\dots,k}\left|\cos\gamma_i -\cot^2\sqrt{\lambda}\cdot \tfrac{\sin^2\gamma_i}{\cos\gamma_i}\right|^{-1}= \max_{i=1,\dots,k}\frac{\cos\gamma_i\cdot|\sin\sqrt\lambda|^2}{|\sin(\sqrt\lambda\!-\!\gamma_i)\sin(\sqrt\lambda\!+\!\gamma_i)|}.
\]
It is easy to see that there exist two constants $c_{1,2}>0$ such that
\begin{equation}
\label{SinEstim}
c_1\leq \frac{|\sin z|}{e^{|\im z|}\cdot\min \{1,\dist(z;\pi \Z)\}}\leq c_2\quad \text{for~all}~z\in\C\,.
\end{equation}
As two distances $\dist(\sqrt\lambda\!\pm\!\gamma_i;\pi\Z)$ cannot be small simultaneously, this implies
\[
\frac{|\sin\sqrt\lambda|^2}{|\sin(\sqrt\lambda\!-\!\gamma_i)\sin(\sqrt\lambda\!+\!\gamma_i)|}
=O(1+[\dist(\sqrt\lambda\!+\!\gamma_i;\pi \Z)]^{-1}+[\dist(\sqrt\lambda\!-\!\gamma_i;\pi Z)]^{-1}).
\]
In order to finish the proof it is sufficient to note that, for all $\lambda\ne (\pi n\pm\gamma_i)^2$, one has
\[
[\dist(\sqrt\lambda\!+\!\gamma_i;\pi \Z)]^{-1}+[\dist(\sqrt\lambda\!-\!\gamma_i;\pi \Z)]^{-1}= O\biggl(\frac{1+|\lambda|^\frac{1}{2}}{\dist(\lambda;\{(\pi n\!\pm\!\gamma_i)^2,n\in\Z\})}\biggr)\,.\qedhere
\]
\end{proof}

\begin{proof}[Proof of Proposition~\ref{NondegeneracyCond}(ii)]
Let an entire function $\xi\colon\C\to\C^N$ satisfy \mbox{$\Pm_\alpha\xi(\lambda_\alpha)=0$} for all \emph{$\alpha\geq0$}. Then, it follows from (i) that the function $\xi^\top(\lambda)[W(\lambda)]^{-1}$ is entire.

Let $\beta\in (0,\frac{\pi}{2})\setminus\{\gamma_1,\dots,\gamma_k\}$ be a fixed constant.
For $\lambda\in C_n^{(\beta)}$, $n\to\infty$, Lemma~\ref{WnolInverseLemma} and asymptotics (\ref{WAsympthotic}) give
\begin{equation}  \label{InverseW}
\begin{split}
[W(\lambda)]^{-1}&=[W_0(\lambda)]^{-1}+[W_0(\lambda)]^{-1}(W(\lambda)-W_0(\lambda))[W_0(\lambda)]^{-1}\\ &=[W_0(\lambda)]^{-1}+\qForm{\Tm^\perp}{\Tm}{O(\lambda^{-1})}{O(\lambda^{-\frac{1}{2}})}
{O(\lambda^{-\frac{1}{2}})}{O(1)}{\Tp^\perp}{\Tp}\cdot e^{-|\im\sqrt{\lambda}|}.
\end{split}
\end{equation}
It directly follows from our assumptions on $\xi(\lambda)$ and asymptotics~(\ref{WnolInverse}),~(\ref{InverseW}) that $\xi^\top(\lambda)[W(\lambda)]^{-1}=O(\lambda^{{1}/{2}})$ for $\lambda\in C_n^{(\beta)}$ as $n\to\infty$ and $\xi^\top(x)[W(x)]^{-1}=O(x^{-{1}/{2}})$ as $x\to+\infty$. Therefore, $\xi(\lambda)\equiv 0$ due to the Liouville theorem.
\end{proof}

\begin{prop}[{\bf rough asymptotics of the spectrum}] \label{PropositionRoughAsymp}
For sufficiently large energies, the spectrum $\{\lambda_\alpha\}_{\alpha\ge 0}$ splits into series according to the orthogonal decomposition of the space $\C^N$ described in Lemma~\ref{LemmaDecomposition}. Namely, there exists an integer $n^\bullet=n^\bullet(\|V\|)\geq 1$ and a constant $\varpi=\varpi(\|V\|)\geq 0$ such that, for all $n\geq n^\bullet$, the operator (\ref{SLP})--(\ref{BoundConditions}) has exactly
\begin{itemize}
\item $nN +N^{\EuScript{NN}}$ eigenvalues on the interval $(-\infty,\pi^2n^2+\varpi)$,
\item $N^{\EuScript{NN}}+ N^{\EuScript{DD}}$ eigenvalues on the interval $(\pi^2n^2-\varpi,\pi^2n^2+\varpi)$,
\item $N^{\EuScript{ND}}+N^{\EuScript{DN}}$ eigenvalues on the interval $(\pi^2(n+\tfrac{1}{2})^2-\varpi,\pi^2(n+\tfrac{1}{2})^2+\varpi)$,
\item $N^{[\gamma_i]}$ eigenvalues on each of the intervals $(\pi^2(n\!\pm\!\gamma_i)^2-\varpi,\pi^2(n\!\pm\!\gamma_i)^2+\varpi)$,
\end{itemize}
where $i=1,\dots,k$, $N^{[\gamma_i]}=\dim \Es[i]-=\dim \Es[i]+$, $N^{\EuScript{NN}}=\dim \Es^{\EuScript{NN}}$, $N^{\EuScript{DD}}=\dim \Es^{\EuScript{DD}}$, $N^{\EuScript{ND}}=\dim \Es^{\EuScript{ND}}$, $N^{\EuScript{DN}}=\dim \Es^{\EuScript{DN}}$, and
all eigenvalues are counted \emph{with} multiplicities.
\end{prop}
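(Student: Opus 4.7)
The plan is to count the eigenvalues of $L$ as zeros of the entire scalar function $\det W(\lambda)$, each appearing with multiplicity equal to $k_\alpha$ (this follows from the self-adjointness of $L$ combined with Proposition~\ref{resMfunction}, which shows that $W(\lambda)^{-1}$ has only simple poles with residues of rank~$k_\alpha$). The key idea is to compare this zero count with that of $\det W_0(\lambda)$, which is computable in closed form, via a Rouch\'e-type argument.

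First, I would compute $\det W_0(\lambda)$ explicitly. Formula~(\ref{Wnol}) block-diagonalizes $W_0$ with respect to the orthogonal decomposition of Lemma~\ref{LemmaDecomposition}; the four ``diagonal'' blocks on $\Es^{\EuScript{NN}}$, $\Es^{\EuScript{DD}}$, $\Es^{\EuScript{ND}}$, $\Es^{\EuScript{DN}}$ contribute the factors $(-\sqrt\lambda\sin\sqrt\lambda)^{N^{\EuScript{NN}}}$, $(-\sin\sqrt\lambda/\sqrt\lambda)^{N^{\EuScript{DD}}}$, $(\pm\cos\sqrt\lambda)^{N^{\EuScript{ND}}+N^{\EuScript{DN}}}$ to $\det W_0$. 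For the block on each $\Es[i]$ I would pass to matched orthonormal bases in which the vectors $\eb[i]_j,\hb[i]_j$ from the proof of Lemma~\ref{LemmaDecomposition} appear, completed with orthonormal bases of the orthogonal complements inside $\Es[i]$; these two bases are related per index $j$ by a $2\times 2$ rotation of angle $\gamma_i$, so every projector $I_{ab}$ reduces to $\pm\cos\gamma_i\cdot I$ or $\pm\sin\gamma_i\cdot I$. A direct $2\times 2$ determinant computation then yields
\[
\det(W_0|_{\Es[i]})=\bigl[\sin^2\sqrt\lambda\cos^2\gamma_i-\cos^2\sqrt\lambda\sin^2\gamma_i\bigr]^{N^{[\gamma_i]}}
=\bigl[\sin(\sqrt\lambda-\gamma_i)\sin(\sqrt\lambda+\gamma_i)\bigr]^{N^{[\gamma_i]}}.
\]
Counting the real zeros of the resulting product of entire functions on $(-\infty,\pi^2 n^2+\varpi)$ (for small enough $\varpi$) gives $N^{\EuScript{NN}}$ zeros at $\lambda=0$ plus the distribution across clusters claimed in the proposition; they sum to $N^{\EuScript{NN}}+nN$ via $N^{\EuScript{NN}}+N^{\EuScript{DD}}+N^{\EuScript{ND}}+N^{\EuScript{DN}}+2\sum_i N^{[\gamma_i]}=N$.

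Second, I would transfer this count from $\det W_0$ to $\det W$ by Rouch\'e's theorem. Fix $\beta\in(0,\pi/2)\setminus\{\gamma_1,\dots,\gamma_k\}$ with $\beta<\gamma_1$ and choose $\varpi>0$ small enough that the real intervals listed in the proposition are pairwise disjoint. For each $n\ge n^\bullet$ consider the union of the small circles of radius $\varpi$ around the centers of these intervals together with the large circle $C_n^{(\beta)}$. On all these contours the distance from $\sqrt\lambda$ to the set $\pi\Z_{\ge 0}\cup(\pi\Z_{\ge 0}+\tfrac12)\cup\{\pi m\pm\gamma_i\}_{m,i}$ is bounded below by a positive constant, so Lemma~\ref{WnolInverseLemma} gives $\|[W_0(\lambda)]^{-1}\|=O(|\lambda|^{1/2})$; combined with the block-wise asymptotics (\ref{WAsympthotic}) exactly as in~(\ref{InverseW}), this yields
\[
\|[W_0(\lambda)]^{-1}(W(\lambda)-W_0(\lambda))\|=o(1)\quad\text{as}\quad|\lambda|\to\infty
\]
uniformly on the chosen contours. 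Writing $\det W=\det W_0\cdot\det(I+W_0^{-1}(W-W_0))$ and noting that the second factor is bounded away from $0$ and single-valued in logarithm on each contour, Rouch\'e shows that $\det W$ has exactly the same number of zeros as $\det W_0$ inside each small circle and inside $C_n^{(\beta)}$.

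The main technical obstacle lies in the careful block-wise bookkeeping of the estimate $\|W_0^{-1}(W-W_0)\|=o(1)$: both factors carry nontrivial structure with respect to $T_\pm$ refined by Lemma~\ref{LemmaDecomposition}, with individual blocks decaying/blowing up at rates ranging from $O(1)$ to $O(|\lambda|^{-1})$. Once this is verified (this is essentially the computation already carried out in the proof of Proposition~\ref{DivisionByWronsk}(ii)), summing the local counts produces the totals stated, and the self-adjointness of $L$ places all the eigenvalues on the real axis, inside the announced real intervals.
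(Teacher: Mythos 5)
Your overall strategy is the same as the paper's: factorize $\det W_0(\lambda)$ explicitly block-by-block along the decomposition of Lemma~\ref{LemmaDecomposition}, count its zeros, and transfer the count to $\det W(\lambda)$ by a Rouch\'e argument on the large circles $C_n^{(\beta)}$ and on small circles centred at the cluster points $\pi^2(n\pm\gamma_*)^2$, $\gamma_*\in\{0,\gamma_1,\dots,\gamma_k,\tfrac{\pi}{2}\}$. Your computation of $\det W_0$, the resulting total $nN+N^{\EuScript{NN}}$ (including the $N^{\EuScript{NN}}$ zeros at $\lambda=0$), and the identification of eigenvalue multiplicities with zero multiplicities of $\det W$ are all correct.

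There is, however, a genuine error in your treatment of $\varpi$, and it invalidates the Rouch\'e step on the small circles. You take $\varpi$ \emph{small} and assert that on the circles $|\lambda-\pi^2(n\pm\gamma_*)^2|=\varpi$ the distance from $\sqrt\lambda$ to the set $\pi\Z\cup(\pi\Z+\tfrac{\pi}{2})\cup\{\pi m\pm\gamma_i\}$ is bounded below by a positive constant, so that $\|W_0^{-1}(W-W_0)\|=o(1)$ there. Both claims are false: on such a circle $\sqrt\lambda=\pi(n+\gamma_*)+O(\varpi/n)$, so that distance decays like $\varpi/(2\pi n)$, and the estimates (\ref{JijEstim}), (\ref{SinEstim}) combined with (\ref{WAsympthotic}) only give
\[
\|(W(\lambda)-W_0(\lambda))[W_0(\lambda)]^{-1}\|=O\bigl(|\lambda|^{-\frac12}\bigr)+O\bigl(\varpi^{-1}\bigr),
\]
which does not tend to $0$ and is $<1$ only if $\varpi$ is chosen \emph{large} enough in terms of $\|V\|$ (and of $a,b$). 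This is not a removable technicality: for $V=cI$ all eigenvalues are shifted by $c$ from the unperturbed ones, so for $\varpi<|c|$ the windows $(\pi^2(n\pm\gamma_i)^2-\varpi,\pi^2(n\pm\gamma_i)^2+\varpi)$ contain no eigenvalues at all and the very statement you are proving fails. The correct order of quantifiers (which is what the paper uses) is: first fix $\varpi=\varpi(\|V\|)$ large enough that the displayed bound is $<1$ on all the contours, then choose $n^\bullet$ large enough that the $2\varpi$-windows around the cluster centres, whose mutual distances grow linearly in $n$, become pairwise disjoint. With this correction the rest of your argument goes through.
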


The proof is based on the following matrix version of Rouche's theorem:
\begin{lem} \label{RoucheMatrixLemm}
Let $F,G\colon \ol{B}\to\C^{N\times N}$ be analytic matrix-valued functions defined on some closed disc $\ol B$
and $\|G(\lambda)F^{-1}(\lambda)\|<1$ for all $\lambda\in\partial B$. Then, the scalar  functions $\det F$ and $\det(F+G)$ have the same number of zeroes in $B$ counting with multiplicities.
\end{lem}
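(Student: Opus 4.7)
The plan is to carry out a homotopy argument based on the scalar argument principle applied to $\det(F+tG)$. First I would set $H_t(\lambda)=F(\lambda)+tG(\lambda)$ for $t\in[0,1]$ and verify that $H_t(\lambda)$ is invertible for every $\lambda\in\partial B$ and every $t\in[0,1]$. Indeed, the hypothesis $\|G(\lambda)F(\lambda)^{-1}\|<1$ on $\partial B$ in particular requires $F(\lambda)$ to be invertible there, and on $\partial B$ one has the factorization
\[
H_t(\lambda)=\bigl(I+t\,G(\lambda)F(\lambda)^{-1}\bigr)F(\lambda).
\]
Since $\|t\,G F^{-1}\|\leq \|G F^{-1}\|<1$, the first factor is invertible by the Neumann series, so $H_t$ is invertible on $\partial B$ for every $t\in[0,1]$, with $\|(I+t\,GF^{-1})^{-1}\|\leq (1-\|GF^{-1}\|)^{-1}$ uniformly in $t$ and $\lambda\in\partial B$.

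Next I would count the zeros of the scalar analytic function $\det H_t$ inside $B$ via the argument principle combined with Jacobi's formula: for each fixed $t\in[0,1]$,
\[
N(t)=\frac{1}{2\pi i}\oint_{\partial B}\tr\bigl(H_t(\lambda)^{-1}H_t'(\lambda)\bigr)\,d\lambda
\]
is a non-negative integer equal to the number of zeros of $\det H_t$ in $B$ counted with multiplicities. By the preceding step, the map $(t,\lambda)\mapsto H_t(\lambda)$ takes values in the open set of invertible matrices on the compact set $[0,1]\times\partial B$; continuity of matrix inversion then implies that $(t,\lambda)\mapsto H_t(\lambda)^{-1}H_t'(\lambda)$ is jointly continuous and in particular bounded, so the integrand is continuous in $t$ uniformly in $\lambda$. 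Hence $N\colon[0,1]\to\Z_{\geq 0}$ is continuous and therefore constant; in particular $N(0)=N(1)$, which is exactly the claim.

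The only point that merits any care is the uniform invertibility of $H_t$ on $\partial B$ that underlies the continuity of $H_t^{-1}$ in $t$, but this is already contained in the first step via the explicit Neumann series bound above. No further analytic subtleties appear; in particular, one does not need any matrix analogue of the scalar Rouche inequality $|\det(F+G)-\det F|<|\det F|$ on $\partial B$, which generally fails under the hypothesis of the lemma (e.g.\ $\det(I+A)$ can be far from $1$ even when $\|A\|$ is close to $1$), so the homotopy through $\det H_t$ rather than a direct scalar Rouche comparison is essential.
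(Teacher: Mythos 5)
Your proof is correct and complete: the factorization $F+tG=(I+tGF^{-1})F$ on $\partial B$, the argument principle via Jacobi's formula, and the constancy of the integer-valued continuous function $N(t)$ together give exactly the claim, and your observation that a naive scalar Rouch\'e comparison of $\det(F+G)$ with $\det F$ would fail is well taken. The paper itself offers no proof here, deferring to \cite[Lemma~2.2]{CK}; your homotopy argument is the standard proof of this matrix Rouch\'e statement and is essentially the argument given in that reference.
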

\begin{proof}
For instance, see the proof of \cite[Lemma~2.2]{CK}.
\end{proof}

\begin{proof}[Proof of Proposition~\ref{PropositionRoughAsymp}] We take $F=W_0$ and $G=W-W_0$. The asymptotics (\ref{WAsympthotic}) and (\ref{WnolInverse}) together with (\ref{JijEstim}) and the estimate (\ref{SinEstim}) applied to the functions $\sin\sqrt\lambda$ and $\cos\sqrt\lambda=
\pm \sin(\sqrt\lambda\pm\frac{\pi}{2})$ imply that
\[
(W(\lambda)-W_0(\lambda))[W_0(\lambda)]^{-1} = O\left(|\lambda|^{-\frac{1}{2}}+\frac{1}{\dist(\lambda;\sigma^{\EuScript{T}}\cup\{\pi^2m^2,m\in\tfrac{1}{2}\Z\})}\right)
\]
as $\lambda\to\infty$, uniformly on bounded subsets of $L^1([0,1];\C^{N\times N})$. Thus, if $n^\bullet$ and $\varpi$ are chosen large enough, the estimate \mbox{$\|(W(\lambda)-W_0(\lambda))[W_0(\lambda)]^{-1}\|<1$} holds true on all the contours $\{\lambda:|\lambda|=\pi^2n^2+\varpi\}$ and $\{\lambda:|\lambda-\pi^2(n\!\pm\!\gamma_*)^2|=\varpi\}$, where $n\geq n^\bullet$ and $\gamma_*=0,\gamma_1,\dots,\gamma_k,\tfrac{1}{2}\pi$.
It is easy to see that
\[
\begin{split}
\det W_0(\lambda)&=[-\sqrt\lambda\sin\sqrt\lambda\,]^{\dim\Es^{\EuScript{NN}}}\cdot
[-\cos\sqrt\lambda\,]^{\dim\Es^{\EuScript{ND}}}\cdot[\,\cos\sqrt\lambda\,]^{\dim\Es^{\EuScript{DN}}}\\
&\times \biggl[-\frac{\sin\sqrt\lambda}{\sqrt\lambda}\,\biggr]^{\dim\Es^{\EuScript{DD}}}\cdot \prod\limits_{i=1}^k\:[\,\sin(\sqrt\lambda\!-\!\gamma_i)\sin(\sqrt\lambda\!+\!\gamma_i)]^{\dim\Es[i]_\pm}.
\end{split}
\]
Indeed, Lemma~\ref{LemmaDecomposition} ensures that $\det W_0(\lambda)$ factorizes into the product of terms corresponding to subspaces $\Es^{\EuScript{NN}}$, $\Es^{\EuScript{ND}}$, $\Es^{\EuScript{DN}}$, $\Es^{\EuScript{DD}}$ and $\Es[i]$, $i=1,\dots,k$. Thus, the first four factors appear immediately and to get the last product it is sufficient to note that
\[
\begin{split}
\det  \left[\qForm{P^{[\gamma_i]}-P^{[\gamma_i]}_+}{P^{[\gamma_i]}_+}
{-\sqrt\lambda\sin\sqrt\lambda\cdot I_{11}}{\cos\sqrt\lambda\cdot I_{12}}
{-\cos\sqrt\lambda \cdot I_{21}}{-\frac{\sin\sqrt\lambda}{\sqrt\lambda} \cdot I_{22} }
{P^{[\gamma_i]}-P^{[\gamma_i]}_-}{P^{[\gamma_i]}_-}\right] & \\ =[\sin^2\sqrt\lambda\cdot\cos^2\gamma_i +\cos^2\sqrt\lambda\cdot (-\sin^2\gamma_i)]^{\dim \Es[i]_\pm} &
\end{split}
\]
since $\angle(\Es[i]-,\Es[i]+)=\gamma_i$\,. In particular, zeroes of the entire function $\det W_0(\lambda)$ coincide with the points $\pi n$, $\pi n+\gamma_i$, $\pi n + \frac{\pi}{2}$ and $\pi(n\!+\!1)-\gamma_i$, $n\geq 0$, with corresponding multiplicities. Thus, the claim immediately follows from Lemma~\ref{RoucheMatrixLemm}.
\end{proof}

\section{Proof of the uniqueness theorem}
\setcounter{equation}{0}
\label{sec:UniquenessPrf}

Similarly to~\cite{CKIsoSets} we begin the proof of Theorem~\ref{uniqueness} with the following claim: the sequence $\{(\lambda_\alpha,\Pm_\alpha)\}_{\alpha\geq 0}$ uniquely determines the function $W(\lambda)=(\Bcondp\Fm)(\lambda)$ and the whole set of spectral data $\{(\lambda_\alpha,\Pm_\alpha,g_\alpha)\}_{\alpha\geq 0}$ uniquely determines the function $(\Bcondpd\Fm)(\lambda)$ and the Weyl-Titchmarsh function $m(\lambda)=-(\Bcondmd\Fp)(\lambda){[(\Bcondm\Fp)(\lambda)]}^{-1}$.

Let $L$ and $\wt L$ be two Sturm-Liouville operators (\ref{SLP})--(\ref{BoundConditions}) with some self-adjoint potentials $V,\wt V$ and possibly different operators $a,\wt{a}:H_-^\perp\to H_-^\perp$ and $b,\wt{b}:H_+^\perp\to H_+^\perp$ involved in the boundary conditions (\ref{BoundConditions}). We denote the spectral data of $L$ and $\wt{L}$ by $(\lambda_\alpha,\,\Pm_\alpha,\,{g}_\alpha)$ and $(\wt\lambda_\alpha,\,\wt\Pm_\alpha,\,\wt{g}_\alpha)$, respectively. In the same style we use the shorthands $\Gamma_\pm$, $\wt{\Gamma}_\pm$ which depend on $a$'s and $b$'s, and denote by $F_\pm$, $\wt F_\pm$ the corresponding fundamental solutions. It is worth noting that $\Gamma_\pm^\perp=\wt{\Gamma}_\pm^\perp$ by definition.


\begin{prop} \label{computations}
Let two vector-valued Sturm-Liouville operators $L$ and $\wt L$ be as above.

\smallskip

\noindent (i)\phantom{i} If $\lambda_\alpha=\wt{\lambda}_\alpha$ and
$\Pm_\alpha=\wt{\Pm}_\alpha$ for all $\alpha\geq0$, then
$(\Bcondp\Fm)(\lambda)=(\wt\Gamma_+\wt{F}_-)(\lambda)$ for all $\lambda\in\C$.

\smallskip

\noindent (ii) If, in addition,
$g_\alpha=\wt{g}_\alpha$ for all $\alpha\geq0$, then
$(\Bcondpd F_-)(\lambda)=(\Bcondpd \wt{F}_-)(\lambda)$ for all $\lambda\in\C$. Moreover, $b=\wt{b}$ and $F_-(1,\lambda)=\wt{F}_-(1,\lambda)$, $F'_-(1,\lambda)=\wt{F}'_-(1,\lambda)$ for all $\lambda\in\C$.
\end{prop}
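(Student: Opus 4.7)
The plan is to use Proposition~\ref{DivisionByWronsk}(i) together with the asymptotic machinery of Section~\ref{sec:W-1asymp} and Liouville-type arguments.

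\textbf{Part~(i).} Set $\zeta(\lambda):=W(\lambda)-\wt W(\lambda)$, where $W=\Bcondp\Fm$ and $\wt W=\wt\Gamma_+\wt F_-$. The hypothesis $\lambda_\alpha=\wt\lambda_\alpha$, $\Pm_\alpha=\wt\Pm_\alpha$ gives $\zeta(\lambda_\alpha)\Pm_\alpha=0$ for every $\alpha$, so Proposition~\ref{DivisionByWronsk}(i) yields that $\eta(\lambda):=\zeta(\lambda)[W(\lambda)]^{-1}$ is entire. Using the bound~(\ref{WAsympthotic}) for $\zeta$ and the expansions~(\ref{WnolInverse})--(\ref{JijEstim}) and~(\ref{InverseW}) for $[W(\lambda)]^{-1}$, a block-by-block calculation on the contours $C_n^{(\beta)}$ gives $\|T_+\eta T_+^\perp\|=O(\lambda^{-1})$, $\|T_+\eta T_+\|=O(\lambda^{-1/2})$, $\|T_+^\perp\eta T_+^\perp\|=O(\lambda^{-1/2})$, and $\|T_+^\perp\eta T_+\|=O(1)$. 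Hence $\eta$ is bounded on $\C$ by the maximum modulus principle, so $\eta\equiv C$ is a constant matrix by Liouville. The three decaying blocks force $C=T_+^\perp CT_+$, giving $\wt W=(I-C)W$. To show $C=0$ I would track the explicit leading term of $T_+^\perp\eta T_+$ on $C_n^{(\beta)}$ as $n\to\infty$: this is a non-trivial function of $\cot^2\beta$ through the operators $J_{12}(\lambda)$ and $J_{22}(\lambda)$ of Lemma~\ref{WnolInverseLemma}, and since it must agree with the $\beta$-independent constant $C$ for every admissible $\beta$, all $\cot^2\beta$-dependent contributions must cancel, which eventually forces $C=0$ and hence $W\equiv\wt W$.

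\textbf{Part~(ii).} Combined with Proposition~\ref{resMfunction}, the extra hypothesis $g_\alpha=\wt g_\alpha$ shows that $m$ and $\wt m$ have the same poles $\{\lambda_\alpha\}$ with identical residues $-\Pm_\alpha g_\alpha^{-1}\Pm_\alpha$; hence $m-\wt m$ is entire. Large-$|\lambda|$ asymptotics of $m$, obtained by inverting $(\Bcondm\Fp)(\lambda)$ in the spirit of Section~\ref{subsec:W-1asymp}, give $m(\lambda)-\wt m(\lambda)\to 0$ along the contours, so $m\equiv\wt m$ by Liouville. Using~(\ref{MFunctDef}), the Wronskian identity $W(\lambda)=-[(\Bcondm\Fp)(\ol\lambda)]^*$, and Part~(i) one then deduces $(\Bcondpd\Fm)(\lambda)=(\Bcondpd\wt F_-)(\lambda)$. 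Projecting the two equalities $\Bcondp\Fm=\wt\Gamma_+\wt F_-$ and $\Bcondpd\Fm=\Bcondpd\wt F_-$ onto $T_+$ and $T_+^\perp$ yields $F_-(1,\lambda)=\wt F_-(1,\lambda)$, $T_+F_-'(1,\lambda)=T_+\wt F_-'(1,\lambda)$, and the one remaining relation $T_+^\perp F_-'(1,\lambda)-T_+^\perp\wt F_-'(1,\lambda) = (\wt b-b)\,T_+^\perp F_-(1,\lambda)$. Comparing the leading $\cos\sqrt\lambda\cdot T_-^\perp$ coefficients in~(\ref{asymptoticsFminus}) on both sides of this last identity forces $b=\wt b$, and therefore $F_-'(1,\lambda)=\wt F_-'(1,\lambda)$ as well.

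\textbf{Main obstacle.} The principal difficulty is eliminating the $T_+^\perp CT_+$ block in Part~(i). For pure Dirichlet ($T_\pm=I$) or Neumann ($T_\pm=0$) boundary conditions this block is automatically zero by the projector structure, so the classical bounded-by-Liouville argument immediately gives $\eta\equiv 0$; in the general mixed case one must genuinely exploit the $\cot^2\sqrt\lambda$-dependence of $[W_0(\lambda)]^{-1}$ coming from the $\EuScript E$-subspace structure of Lemma~\ref{LemmaDecomposition} --- that is, from the nontrivial geometric interplay between $T_-$ and $T_+$ encoded by the angles $\gamma_1,\dots,\gamma_k$.
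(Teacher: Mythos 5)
Your overall architecture for part (i) --- divide by the Wronskian via Proposition~\ref{DivisionByWronsk}(i), estimate block-by-block on the contours $C_n^{(\beta)}$, and apply Liouville --- is exactly the paper's, and your block counts are correct: the paper's one-line claim that $W(\lambda)[\wt W(\lambda)]^{-1}=I+O(\lambda^{-1/2})$ silently passes over precisely the $T_+^\perp(\cdot)\,T_+$ block that you isolate, which a priori is only $O(1)$. The genuine gap is in how you propose to kill the residual constant $C=T_+^\perp CT_+$. The $\cot^2\sqrt\lambda$-dependence carried by $J_{12},J_{22}$ cannot do the job, because the $O(1)$ obstruction survives even when there are no angles at all. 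Take $N=2$, $T_-=0$, $T_+=\mathrm{diag}(1,0)$: then $k=0$ in Lemma~\ref{LemmaDecomposition}, the relevant column of $[W_0(\lambda)]^{-1}$ reduces to $-\tfrac{1}{\cos\sqrt\lambda}P^{\EuScript{ND}}$ with no $J$-terms, and the limit of $T_+^\perp\eta T_+$ along every $C_n^{(\beta)}$ is the $\beta$-independent constant $-T_+^\perp(A+U)T_+$, where $A=a-\wt a$ and $U=\tfrac12\int_0^1(V-\wt V)\,dx$. So ``all $\cot^2\beta$-dependent contributions must cancel'' is vacuous there and establishes nothing; your attribution of the difficulty to the geometry of the $\gamma_i$'s is off target.

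What actually kills $C$ is algebra, not contour geometry. From $\eta\equiv C$ one gets $W-\wt W=CW$; comparing leading terms of both sides in the four $T_\pm$-blocks via~(\ref{asymptoticsFminus}) and using $T_+C=0$, $CT_+=C$ yields $T_+(A+U)T_-^\perp=0$, $T_+UT_-=0$, $T_+^\perp(A+B+U)T_-^\perp=-CT_-^\perp$ and $T_+^\perp(B+U)T_-=-CT_-$ (here $B=b-\wt b$). Since $AT_-=0$, summing the last two gives $C=-T_+^\perp(A+B+U)$, hence $C=CT_+=-T_+^\perp(A+U)T_+$ because $BT_+=0$. Taking adjoints of the first two relations (all of $A,B,U$ are self-adjoint, $A=T_-^\perp AT_-^\perp$, $B=T_+^\perp BT_+^\perp$) gives $T_-^\perp(A+U)T_+=0$ and $T_-(A+U)T_+=T_-UT_+=0$, whence $(A+U)T_+=0$ and $C=0$. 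This is the missing step (and also what is needed to justify the paper's displayed asymptotics); it produces~(\ref{ABU=0}) along the way rather than as a byproduct. A secondary soft spot: in (ii), $m\equiv\wt m$ concerns $F_+$ at $x=0$ and does not by itself yield $(\Bcondpd\Fm)(\lambda)\equiv(\Bcondpd\wt F_-)(\lambda)$; the paper instead proves $(\Bcondpd\Fm)(\lambda_\alpha)\Pm_\alpha=(\Bcondpd\wt F_-)(\lambda_\alpha)\Pm_\alpha$ from the residue identity $Z_\alpha^*(\Bcondpd\Fm)(\lambda_\alpha)\Pm_\alpha=-G_\alpha$ of Lemma~\ref{auxiliary}(i) and then repeats the division-plus-Liouville argument. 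Your concluding steps ($b=\wt b$ and the endpoint values of $F_-$) do agree with the paper.
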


We need the auxiliary lemma which mimics Lemma 2.1(ii) and Lemma 2.2 of \cite{CKIsoSets}:

\begin{lem} \label{auxiliary} \label{reverseW}
Let $V=V^*\in L^1([0,1];\C^{N\times N})$. Then

\smallskip

\noindent (i) for all $\alpha\geq 0$ one has
$G_\alpha=-\Pm_\alpha [(\Bcondp \Dot\Fm)(\lambda_\alpha)]^* (\Bcondpd\Fm)(\lambda_\alpha) \Pm_\alpha$\,;
\smallskip

\noindent (ii) the roots of the entire function $\det W(\lambda)$ are $\{\lambda_\alpha\}_{\alpha\geq 0}$, the multiplicity of each root is $k_\alpha$, and the following asymptotics are fulfilled as $\lambda\to\lambda_\alpha$:
\begin{equation} \label{asW-1}
[W(\lambda)]^{-1}=[(\lambda-\lambda_\alpha)^{-1}\Pm_\alpha+\Pm_\alpha^\perp][Z_\alpha^{-1}+O(\lambda-\lambda_\alpha)],
\end{equation}
where $\Pm_\alpha^\perp=I-\Pm_\alpha$, $Z_\alpha=\Dot W(\lambda_\alpha)\Pm_\alpha+W(\lambda_\alpha)\Pm_\alpha^\perp$ and $\det Z_\alpha\ne 0$ for all $\alpha\geq 0$.
\end{lem}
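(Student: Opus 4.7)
The plan is to derive (i) from the standard Lagrange identity applied to the fundamental solution $\Fm$ on $[0,1]$, and then to combine (i) with a boundary-triplet symplectic relation to establish invertibility of $Z_\alpha$, from which the pole structure of $[W(\lambda)]^{-1}$ and the multiplicity assertion follow.

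For (i), I would integrate the scalar identity
$\tfrac{d}{dx}\bigl([\Fm(x,\mu)]^*\Fm'(x,\lambda)-[\Fm'(x,\mu)]^*\Fm(x,\lambda)\bigr)=(\mu-\lambda)[\Fm(x,\mu)]^*\Fm(x,\lambda)$
over $[0,1]$. The boundary value at $x=0$ vanishes because $\Fm(0,\cdot)=T_-^\perp$ and $\Fm'(0,\cdot)=T_-+a$ are $\lambda$-independent and self-adjoint (the constraint $a=T_-^\perp aT_-^\perp$ yields $aT_-^\perp=T_-^\perp a=a$). A short block computation using $T_++T_+^\perp=I$ and the definitions of $\Bcondp,\Bcondpd$ gives the symplectic identity
$[\psi_2(1)]^*\psi_1'(1)-[\psi_2'(1)]^*\psi_1(1)=(\Bcondpd\psi_2)^*(\Bcondp\psi_1)-(\Bcondp\psi_2)^*(\Bcondpd\psi_1),$
so substituting $\psi_i=\Fm(\cdot,\lambda_i)$ produces
$(\mu-\lambda)\int_0^1[\Fm(x,\mu)]^*\Fm(x,\lambda)\,dx=[(\Bcondpd\Fm)(\mu)]^*W(\lambda)-[W(\mu)]^*(\Bcondpd\Fm)(\lambda).$
Setting $\mu=\lambda_\alpha$ and sandwiching both sides by $\Pm_\alpha$ kills the second term on the right, since $W(\lambda_\alpha)\Pm_\alpha=0$ implies $\Pm_\alpha[W(\lambda_\alpha)]^*=0$. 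Dividing by $\lambda_\alpha-\lambda$ and letting $\lambda\to\lambda_\alpha$ gives the formula, after interchanging the two factors by means of $G_\alpha=G_\alpha^*$.

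For (ii), the main task is to show $\det Z_\alpha\neq 0$. By (i), the composition $\Pm_\alpha[(\Bcondpd\Fm)(\lambda_\alpha)]^*\Dot W(\lambda_\alpha)\Pm_\alpha=-G_\alpha$ restricts to $-g_\alpha$ on $\E_\alpha$ and is therefore invertible there. Applying the same symplectic identity at $\mu=\lambda=\lambda_\alpha$ (where the Lagrange bracket is constant in $x$ and vanishes at $x=0$) yields $[(\Bcondpd\Fm)(\lambda_\alpha)]^*W(\lambda_\alpha)=[W(\lambda_\alpha)]^*(\Bcondpd\Fm)(\lambda_\alpha)$, and projecting on the left by $\Pm_\alpha$ gives $\Pm_\alpha[(\Bcondpd\Fm)(\lambda_\alpha)]^*W(\lambda_\alpha)=0$. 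Now assume $Z_\alpha h=0$: applying $\Pm_\alpha[(\Bcondpd\Fm)(\lambda_\alpha)]^*$ produces $-G_\alpha h=0$, so $\Pm_\alpha h=0$; then $W(\lambda_\alpha)\Pm_\alpha^\perp h=0$ forces $\Pm_\alpha^\perp h\in\E_\alpha\cap\Ran\Pm_\alpha^\perp=\{0\}$, and hence $h=0$.

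Finally, the elementary identity
$W(\lambda)\bigl[(\lambda-\lambda_\alpha)^{-1}\Pm_\alpha+\Pm_\alpha^\perp\bigr]=Z_\alpha+(\lambda-\lambda_\alpha)R(\lambda),$
with $R(\lambda)$ analytic at $\lambda_\alpha$ (encoding the second-order Taylor term of $W$), implies (\ref{asW-1}) after inversion of the right-hand side via a Neumann series near $\lambda_\alpha$. Taking determinants produces $\det W(\lambda)=(\lambda-\lambda_\alpha)^{k_\alpha}\det[Z_\alpha+O(\lambda-\lambda_\alpha)]$ (using that the diagonal factor has determinant $(\lambda-\lambda_\alpha)^{-k_\alpha}$), so $\lambda_\alpha$ is a zero of $\det W$ of multiplicity exactly $k_\alpha$; together with the fact that for $\lambda\notin\sigma(L)$ the map $h\mapsto W(\lambda)h=(\Bcondp\Fm(\cdot,\lambda)h)$ is injective (self-adjointness of $L$ yields only the trivial solution satisfying both boundary conditions), one sees that $\{\lambda_\alpha\}_{\alpha\ge 0}$ are all the roots of $\det W$. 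I expect the most delicate step to be the derivation of the symplectic identity for $\Bcondp,\Bcondpd$, though it reduces to a routine finite-dimensional block computation relying on $a=T_-^\perp aT_-^\perp$ and $b=T_+^\perp bT_+^\perp$.
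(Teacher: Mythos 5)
Your proposal is correct and follows essentially the same route as the paper: part (i) rests on the Lagrange/Wronskian identity together with the symplectic boundary identity (\ref{WronskianBcond}) (you take the two-parameter version and let $\mu\to\lambda$, the paper differentiates $\{\Fm,\Dot\Fm\}$ in $\lambda$ directly, which is the same computation), and part (ii) uses the identical factorization $W(\lambda)=[Z_\alpha+O(\lambda-\lambda_\alpha)][(\lambda-\lambda_\alpha)\Pm_\alpha+\Pm_\alpha^\perp]$ with $\det Z_\alpha\neq0$ deduced from the positivity of $g_\alpha$ and the relation $\{\Fm,\Fm\}=0$. The details all check out, so no gaps to report.
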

\begin{proof} See Appendix.
\end{proof}

\begin{proof}[Proof of Proposition~\ref{computations}]
\noindent(i) As $(\Gamma_+F_-)(\lambda_\alpha)\Pm_\alpha=0$ and $\Pm_\alpha=\wt{\Pm}_\alpha$ for all \mbox{$\alpha\geq0$},
 Proposition \ref{DivisionByWronsk}(i) implies that $(\Gamma_+F_-)(\lambda)[(\wt{\Gamma}_+\wt{F}_-)(\lambda)]^{-1}$
 is an entire function. Using asymptotics (\ref{WAsympthotic}),(\ref{InverseW}) and identities (\ref{Wnol}),(\ref{WnolInverse}) it is easy to conclude that
\begin{equation*}
(\Gamma_+F_-)(\lambda)[(\wt{\Gamma}_+\wt{F}_-)(\lambda)]^{-1}=W(\lambda)[\wt{W}(\lambda)]^{-1}=
W_0(\lambda)[W_0(\lambda)]^{-1}+O(\lambda^{-\frac{1}{2}})=I+O(\lambda^{-\frac{1}{2}})
\end{equation*}
for $\lambda\in C_n^{(\beta)}=\{\lambda:|\lambda|=(\pi n+\beta)^2\}$ and $\beta \ne 0,\pm\gamma_1,\dots,\pm\gamma_k,\frac{1}{2}\pi\mod\pi$. According to the Liouville theorem, this means $(\Gamma_+F_-)(\lambda)=(\wt{\Gamma}_+\wt{F}_-)(\lambda)$. As a byproduct, let us note that asymptotics (\ref{asymptoticsFminus}) imply
\begin{equation*} \begin{split}
0&=(\Gamma_+F_-)(\lambda)-(\wt{\Gamma}_+\wt{F}_-)(\lambda)\\
&= \begin{pmatrix} T_+^\perp & \lambda^{-\frac{1}{2}}T_+\end{pmatrix}\left[\begin{pmatrix} (A+B+U)\cos\sqrt{\lambda} & (B+U)\sin\sqrt{\lambda} \\ -(A+U)\sin\sqrt{\lambda} & U\cos\sqrt{\lambda}\end{pmatrix} +o(e^{|\im\,\sqrt{\lambda}|})
\right]\begin{pmatrix} T_-^\perp \\ \lambda^{-\frac{1}{2}}T_-\end{pmatrix}
\end{split}\end{equation*}
as $\lambda\to\infty$, where $A=a-\wt{a}$, $B=b-\wt{b}$ and $U=\frac{1}{2}\int_0^1(V(x)-\wt{V}(x))dx$. In particular, the leading terms should vanish, i.e.,
\begin{equation}\label{ABU=0}
\begin{array}{rr}
T_+^\perp(A+B+U)T_-^\perp=0, & T_+^\perp(B+U)T_-=0, \\
T_+(A+U)T_-^\perp=0, & T_+UT_-=0.
\end{array}
\end{equation}

\smallskip
\noindent(ii) From now on we use the notation $W(\lambda)=(\Gamma_+F_-)(\lambda)=(\wt{\Gamma}_+\wt{F}_-)(\lambda)$. Recall that $\Pm_\alpha=\wt{\Pm}_\alpha$ for all $\alpha\geq 0$.
Our intention is to apply Proposition \ref{DivisionByWronsk}(i) to the function $(\Bcondpd F_-)(\lambda)-(\Bcondpd \wt{F}_-)(\lambda)$ so we need to check that $(\Bcondpd F_-)(\lambda_\alpha)\Pm_\alpha=(\Bcondpd \wt{F}_-)(\lambda_\alpha)\wt{\Pm}_\alpha$ for all $\alpha\geq 0$. Using Lemma \ref{auxiliary} one obtains (for both $(V;a,b)$ and $(\wt{V};\wt{a},\wt{b})$)
\begin{equation*} \begin{split} 
 Z_\alpha^*(\Bcondpd \Fm)(\lambda_\alpha) \Pm_\alpha & = \Pm_\alpha[(\Bcondp\Dot{\Fm})(\lambda_\alpha)]^*(\Bcondpd\Fm)(\lambda_\alpha)\Pm_\alpha + \Pm_\alpha^\perp[(\Bcondp\Fm)(\lambda_\alpha)]^* (\Bcondpd\Fm)(\lambda_\alpha) \Pm_\alpha \\
 & = -G_\alpha + \Pm_\alpha^\perp[(\Bcondpd\Fm)(\lambda_\alpha)]^* (\Bcondp\Fm)(\lambda_\alpha) \Pm_\alpha \\ &= -G_\alpha,
\end{split} \end{equation*}
where we have used the identities
\[
[(\Bcondpd\Fm)(\lambda_\alpha)]^* (\Bcondp\Fm)(\lambda_\alpha)-[(\Bcondp\Fm)(\lambda_\alpha)]^* (\Bcondpd\Fm)(\lambda_\alpha) 
=\{\Fm,\Fm\}(\lambda_\alpha)=0
\]
(which follows from the fact that the Wronskian $\{\Fm,\Fm\}$ does not depend on~$x$, see also~(\ref{WronskianBcond})) and $(\Bcondp\Fm)(\lambda_\alpha) \Pm_\alpha=0$ (which follows from the definition of $\Pm_\alpha$). Since $Z_\alpha=\wt{Z}_\alpha$, we have
\[
((\Bcondpd F_-)(\lambda_\alpha)-(\Bcondpd \wt{F}_-)(\lambda_\alpha))\Pm_\alpha= (Z_\alpha^*)^{-1}(G_\alpha-\wt{G}_\alpha)=0\,.
\]
Therefore, the function $((\Bcondpd F_-)(\lambda)-(\Bcondpd \wt{F}_-)(\lambda))[W(\lambda)]^{-1}$ is entire. Further, it follows from (\ref{asymptoticsFminus}) that
\[
(\Bcondpd F_-)(\lambda)-(\Bcondpd \wt{F}_-)(\lambda) = \qForm{\Tp^\perp}{\Tp}{O(\lambda^{-\frac{1}{2}})}{O(\lambda^{-1})}
{o(1)}{o(\lambda^{-\frac{1}{2}})}{\Tm^\perp}{\Tm}\cdot e^{|\im\sqrt{\lambda}|}
\]
as $\lambda\to\infty$, where we have used the identities $T_+(A+U)T_-^\perp=0$ and $T_+UT_-=0$ in the second row, see (\ref{ABU=0}). Therefore, (\ref{WnolInverse}) and (\ref{InverseW}) imply
\[
((\Bcondpd F_-)(\lambda)-(\Bcondpd \wt{F}_-)(\lambda))[W(\lambda)]^{-1}=o(1),\qquad \lambda\in C_n^{(\beta)},
\]
where the contours $C_n^{(\beta)}$ are chosen as above and $n\to\infty$. According to the Liouville theorem, this means $(\Bcondpd F_-)(\lambda)=(\Bcondpd \wt{F}_-)(\lambda)$.

In order to prove that $b=\wt{b}$ we use asymptotics (\ref{asymptoticsFminus}) once more and arrive at
\begin{equation*} \begin{split}
0&=(\Gamma_+^\perp F_-)(\lambda)-({\Gamma}_+^\perp\wt{F}_-)(\lambda)\\
&= \begin{pmatrix} \lambda^{-\frac{1}{2}}T_+^\perp & T_+\end{pmatrix}\left[\begin{pmatrix} (A+U)\sin\sqrt{\lambda} & -U\cos\sqrt{\lambda} \\ (A+U)\cos\sqrt{\lambda} & U\sin\sqrt{\lambda}\end{pmatrix} +o(e^{|\im\,\sqrt{\lambda}|})
\right]\begin{pmatrix} T_-^\perp \\ \lambda^{-\frac{1}{2}}T_-\end{pmatrix}
\end{split}\end{equation*}
as $\lambda\to\infty$. Again, the leading terms should vanish, i.e.,
\begin{equation}\label{AU=0}
\begin{array}{rr}
T_+^\perp(A+U)T_-^\perp=0, & T_+^\perp UT_-=0, \\
T_+(A+U)T_-^\perp=0, & T_+UT_-=0.
\end{array}
\end{equation}
Subtracting the first rows of (\ref{ABU=0}) and (\ref{AU=0}) we conclude that $T_+^\perp B=b-\wt{b}=0$. Finally, one has
\begin{equation*} \begin{split}
F_-(1,\lambda)& =T_+^\perp(\Bcondpd F_-)(\lambda)-T_+(\Bcondp F_-)(\lambda)=\wt{F}_-(1,\lambda),\\
F'_-(1,\lambda)&=T_+(\Bcondpd F_-)(\lambda)+T_+^\perp((\Bcondp F_-)(\lambda)-bF_-(1,\lambda))=\wt{F}_-(1,\lambda).\qedhere
\end{split} \end{equation*}
\end{proof}

We are now in a position to prove the uniqueness theorem.

\begin{proof}[Proof of Theorem~\ref{uniqueness}]
The proof mimics the proof of Theorem~1.2 in~\cite{CKIsoSets}. Recall that Proposition~\ref{computations}(i) gives $(\Gamma_+F_-)(\lambda)=(\wt{\Gamma}_+\wt{F}_-)(\lambda)=W(\lambda)$ for all $\lambda\in\C$. Denote
\begin{equation*}\begin{split}
K(x,\lambda)&=\begin{pmatrix} \Fm(x,\lambda)& \Fp(x,\lambda)\\
\Fm'(x,\lambda) &\Fp'(x,\lambda) \end{pmatrix}\begin{pmatrix}
\wt{F}_-(x,\lambda)& \wt{F}_+(x,\lambda)\\ \wt{F}'_-(x,\lambda)
& \wt{F}'_+(x,\lambda) \end{pmatrix}^{-1}\\
& = -\begin{pmatrix} \Fm& \Fp\\\Fm' &\Fp' \end{pmatrix} (x,\lambda)
\begin{pmatrix}[W(\lambda)]^{-1} &0\\ 0 &[W^*(\ol\lambda)]^{-1} \end{pmatrix}
\begin{pmatrix}(\wt{F}'_+)^* &- \wt{F}_+^*\\-(\wt{F}'_-)^*& \wt{F}_-^* \end{pmatrix}(x,\ol\lambda).
\end{split}\end{equation*}
It is easy to see that the function $K(x,\lambda)$ satisfies the differential equation
\begin{equation}  \label{DiffEq}
K'(x,\lambda)=\begin{pmatrix}0 &I\\ V-\lambda I &0 \end{pmatrix}K(x,\lambda) -
K(x,\lambda)
\begin{pmatrix}0 &I\\ \widetilde V-\lambda I &0 \end{pmatrix}.
\end{equation}
Moreover, $K(1,\lambda)=I$ for all $\lambda\in\C$ due to Proposition~\ref{computations}(ii). Hence, $K(x,\lambda)$ is an entire function of $\lambda$ for each $x\in [0,1]$. Taking leading terms in (\ref{asymptoticsFminus}),(\ref{asymptoticsFplus}) and using Lemma~\ref{WnolInverseLemma} and (\ref{InverseW}) we conclude that
\begin{equation}
K(x,\lambda)=\begin{pmatrix}I+O(\lambda^{-\frac{1}{2}}) & O(\lambda^{-1})\\
O(1) & I+ O(\lambda^{-\frac{1}{2}}) \end{pmatrix},\qquad \lambda\in C_n^{(\beta)},
\end{equation}
as $n\to\infty$, where $C_n^{(\beta)}=\{\lambda:|\lambda|=(\pi n+\beta)^2\}$ and $\beta\ne 0,\pm\gamma_1,\dots,\pm\gamma_k,\frac{1}{2}\pi\mod \pi$. Hence,  $K(x,\lambda)$ is a constant matrix with the first row $(I~~0)$. In particular, this gives $F_\pm(x,\lambda)=\wt{F}_\pm(x,\lambda)$ for all $x\in [0,1]$ and $\lambda\in\C$ which implies $(V;a,b)=(\widetilde{V};\wt{a},\wt{b})$.
\end{proof}

\renewcommand{\thesection}{A}
\section{Appendix}
\setcounter{equation}{0}
\renewcommand{\theequation}{A.\arabic{equation}}
In this Appendix we collect several basic facts about zeroes of the Wronskian $W(\lambda)$ and residues of the Weyl-Titchmarsh function $m(\lambda)$, see~(\ref{MFunctDef}). In particular, we prove Lemma~\ref{auxiliary}, Proposition~\ref{DivisionByWronsk}(i) (recall that its second part has been derived from the first in Sect.~\ref{subsec:W-1asymp}) and Proposition~\ref{resMfunction}. The proofs essentially mimic the paper~\cite{CKIsoSets} devoted to vector-valued Sturm-Liouville operators with Dirichlet boundary conditions.

For two matrix-valued functions $F=F(x,\lambda)$ and $G=G(x,\lambda)$ we define their \emph{Wronskian} $\{F,G\}$ as follows:
\[
\{F,G\}(x,\lambda)= [{F}(x,\ol{\lambda})]^*G'(x,\lambda)- [F'(x,\ol{\lambda})]^*G(x,\lambda).
\]
If both $F,G$ are solutions of (\ref{DiffEquation}) with the same potential $V=V^*$, one can easily check that $\{F,G\}'=0$, and hence the Wronskian does not depend on $x\in [0,1]$. Moreover, in this case the values of $\{F,G\}$ at $x=1$ and $x=0$ can be rewritten as
\begin{equation}\begin{split}
\label{WronskianBcond}
\{F,G\}(\lambda) & = [(\Bcondpd F)(\overline{\lambda})]^* (\Bcondp G)(\lambda)-[(\Bcondp F)(\overline{\lambda})]^* (\Bcondpd G)(\lambda) \\
& = [(\Bcondmd F)(\overline{\lambda})]^* (\Bcondm G)(\lambda)-[(\Bcondm F)(\overline{\lambda})]^* (\Bcondmd G)(\lambda) \,.
\end{split}\end{equation}
In particular,
\[
W(\lambda)=\{\Fp,\Fm\}(\lambda)=(\Bcondp \Fm)(\lambda)= -[(\Bcondm\Fp)(\lambda)]^*.
\]

\begin{proof}[Proof of Lemma \ref{auxiliary}] (i) It is easy to check that $[\Fm^*\Fm](x,\lambda_\alpha)=-\{\Fm,\Dot\Fm\}(x,\lambda_\alpha)$. Therefore,
\[
G_\alpha=\Pm_\alpha[\{\Fm,\Dot\Fm\}(0,\lambda_\alpha)-\{\Fm,\Dot\Fm\}(1,\lambda_\alpha)] \Pm_\alpha = -\Pm_\alpha\{\Fm,\Dot\Fm\}(1,\lambda_\alpha)\Pm_\alpha
\]
as the initial conditions of $\Fm(x,\lambda)$ do not depend on $\lambda$. Moreover, the identity $\Bcondp [\Fm (\lambda_\alpha)\Pm_\alpha ]=0$ implies
\begin{equation*}\begin{split}
&\Fm(1,\lambda_\alpha)\Pm_\alpha=T_+^\perp \Fm(1,\lambda_\alpha)\Pm_\alpha,\\
&\Fm'(1,\lambda_\alpha)\Pm_\alpha= [T_+\Fm'-T_+^\perp b\Fm](1,\lambda_\alpha)\Pm_\alpha.
\end{split}\end{equation*}
Hence,
\begin{equation*}\begin{split}
G_\alpha & = -\Pm_\alpha[\Fm^*T_+^\perp\Dot{\Fm'}-((\Fm')^*T_+-\Fm^*bT_+^\perp)\Dot\Fm](1,\lambda_\alpha)\Pm_\alpha \\
& = -\Pm_\alpha [(\Fm^*T_+^\perp+(\Fm')^*T_+)(T_+^\perp(\Dot{\Fm'}+b\Dot{\Fm})-T_+\Dot{\Fm})](1,\lambda_\alpha)\Pm_\alpha \\
& = -\Pm_\alpha [(\Bcondpd \Fm)(\lambda_\alpha)]^*(\Bcondp \Dot{\Fm}) (\lambda_\alpha) \Pm_\alpha.
\end{split}\end{equation*}
This gives the result since $G_\alpha=G_\alpha^*$.

\smallskip

\noindent (ii) Note that $\det W(\lambda)=0$ if and only if there exists a nontrivial vector $h\in\C^N$ such that $W(\lambda)h=(\Bcondp\Fm)(\lambda)h=0$, i.e. if and only if $\lambda$ is an eigenvalue of the Sturm-Liouville operator $L$ given by (\ref{SLP})--(\ref{BoundConditions}). Since $W(\lambda_\alpha)\Pm_\alpha =0$, one has
\begin{equation} \begin{split}
W(\lambda)&=W(\lambda)\Pm_\alpha+W(\lambda)\Pm_\alpha^\perp\\
&= [(\lambda\!-\!\lambda_\alpha) \Dot W(\lambda_\alpha)+O((\lambda\!-\!\lambda_\alpha)^2)]
\Pm_\alpha+[W(\lambda_\alpha)+ O(\lambda-\lambda_\alpha)]\Pm_\alpha^\perp\\
&= [Z_\alpha +O(\lambda\!-\!\lambda_\alpha)][(\lambda\!-\!\lambda_\alpha)\Pm_\alpha+\Pm_\alpha^\perp]
\end{split} \notag \end{equation}
as $\lambda\to\lambda_\alpha$. If $\det Z_\alpha\ne 0$, this implies (\ref{asW-1}). Moreover, in this case the entire function $\det W(\lambda)$ has a root of multiplicity exactly $k_\alpha$ at $\lambda=\lambda_\alpha$ as
\[
\det W(\lambda)=[\det Z_\alpha+O(\lambda\!-\!\lambda_\alpha)]\cdot (\lambda-\lambda_\alpha)^{k_\alpha},\quad \lambda\to\lambda_\alpha.
\]
In order to prove that $\det Z_\alpha \ne 0$, let us assume that $Z_\alpha h=0$ for some $h\in \C^N$. Due to (i) and $(\Bcondp\Fm)(\lambda_\alpha)\Pm_\alpha=0$, we conclude that
\begin{equation*}\begin{split}
\langle g_\alpha \Pm_\alpha h\,,\Pm_\alpha h\rangle & =
\langle -(\Bcondp\Dot{\Fm})(\lambda_\alpha)\Pm_\alpha h \,,(\Bcondpd\Fm)(\lambda_\alpha)\Pm_\alpha h\rangle\\
& = \langle (\Bcondp{\Fm})(\lambda_\alpha)\Pm_\alpha^\perp h \,,(\Bcondpd\Fm)(\lambda_\alpha)\Pm_\alpha h\rangle \\
& = \langle (\Bcondpd{\Fm})(\lambda_\alpha)\Pm_\alpha^\perp h \,,(\Bcondp\Fm)(\lambda_\alpha)\Pm_\alpha h\rangle =0,
\end{split}\end{equation*}
where we have used the identity $\{\Fm,\Fm\}=0$ and~(\ref{WronskianBcond}). Therefore, $\Pm_\alpha h=0$ which further leads to $ W(\lambda_\alpha) \Pm_\alpha^\perp h = Z_\alpha h - \dot{W}(\lambda_\alpha)\Pm_\alpha h =0$. By definition $\Ker W(\lambda_\alpha)=\E_\alpha$, hence $\Pm_\alpha^\perp h =0$, i.e., $h=0$.
\end{proof}

\begin{proof}[Proof of Proposition~\ref{DivisionByWronsk}(i)] This is an easy corollary of Lemma~\ref{auxiliary}(ii). Indeed, if $\zeta(\lambda_\alpha)\Pm_\alpha=0$, then the function
\[
\zeta(\lambda)[\Bcondp\Fm(\lambda)]^{-1}= 
[\zeta(\lambda_\alpha)+O(\lambda\!-\!\lambda_\alpha)]\cdot[(\lambda-\lambda_\alpha)^{-1}\Pm_\alpha+\Pm_\alpha^\perp]\cdot O(1),\quad \lambda\to\lambda_\alpha,
\]
has no singularity at $\lambda_\alpha$.
\end{proof}

To prove Proposition~\ref{resMfunction} we need one more auxiliary result. Recall that we define the matrix-valued Weyl-Titchmarsh function of the Sturm-Liouville operator (\ref{SLP})--(\ref{BoundConditions}) as
\[
m(\lambda)= -(\Bcondmd\Fp)(\lambda){[(\Bcondm\Fp)(\lambda)]}^{-1}.
\]
Let $\Pm^\sharp_\alpha:\C^N\to \E_\alpha^\sharp$ be orthogonal projectors onto subspaces
\[
\E^\sharp_\alpha=\Ker\,(\Bcondm\Fp)(\lambda_\alpha) = \Ker\,[W^*(\lambda_\alpha)]\,.
\]

\begin{lem} \label{InitialCondMaps} The following holds true for all $\alpha\geq0$:

\smallskip
\noindent (i)\phantom{i} $\Ran [(\Bcondpd\Fm)(\lambda_\alpha)\Pm_\alpha] \subset\E^\sharp_\alpha$ and $\Ran [(\Bcondmd\Fp)(\lambda_\alpha)\Pm^\sharp_\alpha]\subset\E_\alpha$;

\smallskip
 \noindent (ii) $(\Bcondmd\Fp)(\lambda_\alpha)(\Bcondpd\Fm)(\lambda_\alpha)\Pm_\alpha=\Pm_\alpha$ and
 $(\Bcondpd\Fm)(\lambda_\alpha)(\Bcondmd\Fp)(\lambda_\alpha)\Pm^\sharp_\alpha=\Pm^\sharp_\alpha$.

\smallskip

\noindent In other words, $(\Bcondpd\Fm)(\lambda_\alpha)$ maps $\E_\alpha$ into $\E^\sharp_\alpha$,
$(\Bcondmd\Fp)(\lambda_\alpha)$ maps $\E^\sharp_\alpha$ into $\E_\alpha$, and these mappings are inverse to each other on $\E_\alpha$ and $\E^\sharp_\alpha$, respectively.
\end{lem}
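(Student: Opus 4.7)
The plan is to exploit the fact that any vector $h\in\E_\alpha$ gives rise to an eigenfunction $\psi(x):=\Fm(x,\lambda_\alpha)h$ of $L$, and that this eigenfunction admits a second, equally natural representation $\psi=\Fp(\cdot,\lambda_\alpha)h^\sharp$. Matching the two representations under the boundary operators will produce both parts of the lemma essentially for free.

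First I would observe that $\psi(x):=\Fm(x,\lambda_\alpha)h$ satisfies $\Bcondm\psi=0$ by the definition (\ref{FpmBoundCond}) of $\Fm$, and $\Bcondp\psi=(\Bcondp\Fm)(\lambda_\alpha)h=W(\lambda_\alpha)h=0$ since $h\in\E_\alpha=\Ker W(\lambda_\alpha)$. Thus $\psi$ is an eigenfunction. Since $\Fp(\cdot,\lambda_\alpha)$ parametrizes the full $N$-dimensional family of solutions of the differential equation (\ref{DiffEquation}) satisfying $\Bcondp=0$ (the initial data at $x=1$ being uniquely recoverable from $(\Bcondp,\Bcondpd)$, exactly as for $(\Bcondm,\Bcondmd)$ at $x=0$), we can write $\psi=\Fp(\cdot,\lambda_\alpha)h^\sharp$ for a unique $h^\sharp\in\C^N$. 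Applying $\Bcondpd$ and using $\Bcondpd\Fp=I$ yields $h^\sharp=\Bcondpd\psi=(\Bcondpd\Fm)(\lambda_\alpha)h$.

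Now applying $\Bcondm$ to the $\Fp$-representation of $\psi$ gives $(\Bcondm\Fp)(\lambda_\alpha)h^\sharp=\Bcondm\psi=0$, which says $h^\sharp\in\E^\sharp_\alpha$. This is exactly the first inclusion in (i); the second follows by interchanging the roles of $\Fm$ and $\Fp$ (starting from $h^\sharp\in\E^\sharp_\alpha$ and the eigenfunction $\Fp(x,\lambda_\alpha)h^\sharp$). For part (ii), I would apply $\Bcondmd$ to both representations of $\psi$: from the $\Fm$ side, $\Bcondmd\Fm=I$ gives $\Bcondmd\psi=h$, while from the $\Fp$ side $\Bcondmd\psi=(\Bcondmd\Fp)(\lambda_\alpha)h^\sharp=(\Bcondmd\Fp)(\lambda_\alpha)(\Bcondpd\Fm)(\lambda_\alpha)h$. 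Equating the two and noting that this is valid for every $h\in\E_\alpha=\Ran\Pm_\alpha$ yields the first identity of (ii). The second identity is obtained in the same way, starting from $h^\sharp\in\E^\sharp_\alpha$ and applying $\Bcondpd$.

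There is no real analytic obstacle here; the only point that needs care is the (standard) fact that $(\Bcondp,\Bcondpd)$ parametrizes initial data at $x=1$, so that every solution of (\ref{DiffEquation}) with $\Bcondp=0$ is of the form $\Fp(\cdot,\lambda)h^\sharp$ with $h^\sharp=\Bcondpd\psi$, and symmetrically at $x=0$. Given this, the bookkeeping is immediate and the argument reduces to reading off the two descriptions of the same eigenfunction via the ``dual'' boundary operators.
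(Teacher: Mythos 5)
Your argument is correct and is essentially the paper's own proof: your two representations of the eigenfunction, $\psi=\Fm(\cdot,\lambda_\alpha)h=\Fp(\cdot,\lambda_\alpha)h^\sharp$ with $h^\sharp=(\Bcondpd\Fm)(\lambda_\alpha)h$, amount exactly to the paper's statement that $\eta(x)=[\Fp(x,\lambda_\alpha)(\Bcondpd\Fm)(\lambda_\alpha)-\Fm(x,\lambda_\alpha)]\Pm_\alpha$ vanishes identically, which it proves by the same observation you make (namely that $\Bcondp\eta=\Bcondpd\eta=0$ forces zero Cauchy data at $x=1$), and both (i) and (ii) are then read off by applying $\Bcondm$ and $\Bcondmd$, with the second halves obtained by the symmetric argument.
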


\begin{proof}
Let
\[
\eta(x)=[\Fp(x,\lambda_\alpha)(\Bcondpd\Fm)(\lambda_\alpha) -\Fm(x,\lambda_\alpha)]\Pm_\alpha,\quad x\in [0,1].
\]
Clearly, $\eta(x)$ is a solution of the differential equation (\ref{DiffEquation}) and
\begin{equation*}\begin{split}
\Bcondp\eta & = -(\Bcondp\Fm)(\lambda_\alpha)\Pm_\alpha=0,\\
\Bcondpd\eta & = [(\Bcondpd\Fm)(\lambda_\alpha)-(\Bcondpd\Fm)(\lambda_\alpha)]\Pm_\alpha=0.
\end{split}\end{equation*}
Hence, $\eta(x)=0$ for all $x\in [0,1]$. In particular,
\begin{equation*}\begin{split}
\Bcondm\eta & = (\Bcondm\Fp)(\lambda_\alpha) (\Bcondpd\Fm)(\lambda_\alpha)\Pm_\alpha =0,\\
\Bcondmd\eta & = [(\Bcondmd\Fp)(\lambda_\alpha) (\Bcondpd\Fm)(\lambda_\alpha)-I]\Pm_\alpha=0.
\end{split}\end{equation*}
The first equation means $\Ran\,[(\Bcondpd\Fm)(\lambda_\alpha)\Pm_\alpha]\subset \Ker\,(\Bcondm\Fp)(\lambda_\alpha)=\E^\sharp_\alpha$ while the second gives the first part of (ii). Two other claims follow from similar arguments applied to the function $\eta^\sharp(x)=[\Fm(x,\lambda_\alpha)(\Bcondmd\Fp)(\lambda_\alpha) -\Fp(x,\lambda_\alpha)]\Pm^\sharp_\alpha$.
\end{proof}

\begin{proof}[Proof of Proposition~\ref{resMfunction}]
The identity $\{\Fp,\Fp\}=0$ (see~(\ref{WronskianBcond})) gives $m(\lambda)=[m(\overline{\lambda})]^*$. Moreover, (\ref{asW-1}) yields
\[
R_\alpha = \res\nolimits_{\lambda=\lambda_\alpha}m(\lambda)=-(\Bcondmd\Fp)(\lambda_\alpha)\cdot [Z_\alpha^{-1}]^*\Pm_\alpha\,.
\]
In particular, $R_\alpha^{\phantom{1}}\Pm^\perp_\alpha=0$. Thus, it is sufficient to prove that
$R_\alpha G_\alpha=\Pm_\alpha$. It follows from Lemma~\ref{auxiliary}(ii), the identity $[\Dot{W}(\lambda_\alpha)\Pm_\alpha]^*=[Z_\alpha\Pm_\alpha]^*$, and Lemma~\ref{InitialCondMaps}(i) that
\begin{equation*}\begin{split}
R_\alpha G_\alpha & =(\Bcondmd\Fp)(\lambda_\alpha)[Z_\alpha^{-1}]^*\cdot \Pm_\alpha[\Dot{W}(\lambda_\alpha)]^*\cdot (\Bcondpd\Fm)(\lambda_\alpha)\Pm_\alpha\\
& = (\Bcondmd\Fp)(\lambda_\alpha)[Z_\alpha^{-1}]^*\cdot \Pm_\alpha Z_\alpha^*\cdot \Pm_\alpha^\sharp(\Bcondpd\Fm)(\lambda_\alpha)\Pm_\alpha\,.
\end{split}\end{equation*}
Note that $Z_\alpha^*\Pm_\alpha^\sharp = \Pm_\alpha[\Dot{W}(\lambda_\alpha)]^*\Pm_\alpha^\sharp = \Pm_\alpha Z_\alpha^*\Pm_\alpha^\sharp$. Finally, Lemma~\ref{InitialCondMaps}(ii) implies
\begin{equation*}\begin{split}
R_\alpha G_\alpha & = (\Bcondmd\Fp)(\lambda_\alpha)\cdot [Z_\alpha^{-1}]^*Z_\alpha^*\Pm_\alpha^\sharp\cdot(\Bcondpd\Fm)(\lambda_\alpha)\Pm_\alpha \\
& =(\Bcondmd\Fp)(\lambda_\alpha)\Pm_\alpha^\sharp(\Bcondpd\Fm)(\lambda_\alpha)\Pm_\alpha  = \Pm_\alpha
\end{split}\end{equation*}
which gives the result.
\end{proof}


\def\cprime{$'$} \def\cprime{$'$} \def\cprime{$'$}
\providecommand{\bysame}{\leavevmode\hbox to3em{\hrulefill}\thinspace}
\providecommand{\MR}{\relax\ifhmode\unskip\space\fi MR }
\providecommand{\MRhref}[2]{%
  \href{http://www.ams.org/mathscinet-getitem?mr=#1}{#2}
}
\providecommand{\href}[2]{#2}

\end{document}